\newtheorem{thm}{Theorem}[section]
\newtheorem{prop}[thm]{Proposition}
\theoremstyle{definition}
\newtheorem{exmp}[thm]{Example}
\theoremstyle{remark}
\newtheorem{rem}[thm]{Remark}
\newcommand{\ulambda}{{\boldsymbol{\lambda}}}
\begin{document}
\title{On the Mullineux involution for Ariki-Koike algebras}
\author{Nicolas Jacon and C\'edric Lecouvey}
\address{N.J.:Universit\'e de Franche-Comt\'e, UFR Sciences et Techniques, 16 route
de Gray, 25 030 Besan\c{c}on, France.}
\email{njacon@univ-fcomte.fr}
\address{C. L. Laboratoire de Math\'{e}matiques Pures et Appliqu\'{e}es Joseph
Liouville Centre Universitaire de la Mi-Voix B.P. 699 62228 Calais France}
\email{Cedric.Lecouvey@lmpa.univ-littoral.fr}
\date{April, 2008}

\begin{abstract}
This note is concerned with a natural generalization of the Mullineux
involution for Ariki-Koike algebras. By using a result of Fayers together
with previous results by the authors, we give an efficient algorithm for
computing this generalized Mullineux involution. Our algorithm notably does
not involve the determination of paths in affine crystals.
\end{abstract}

\maketitle


\pagestyle{myheadings}

\markboth{Nicolas Jacon and C\'edric Lecouvey}{Mullineux involution for
Ariki-Koike algebras}


\section{Introduction}

It is known that the set of simple modules of the symmetric group over a
field of characteristic $p$ can be labelled by the set $\mathcal{P}_{p}$ of $%
p$-regular partitions, that is, the partitions where no non zero part is
repeated $p$ or more times. The tensor product of one of these simple
modules with the sign representation is again simple. This gives a natural
involution $I$ on $\mathcal{P}_{p}.$ In 1979, Mullineux \cite{Mu} introduced
a combinatorial algorithm yielding an involution $m$ on $\mathcal{P}_{p}$
and conjectured the equality $m=I$.\ In 1994, Kleshchev \cite{KL} obtained
the first combinatorial description of the involution $I$ by using sequences
of $p$-regular partitions.\ In particular, the corresponding algorithm $k$
is different from the combinatorial procedure $m$ of Mullineux.\ Finally,
the conjecture $m=I$ (and thus the equality $m=k$) was proved by Ford and
Kleshchev in \cite{KF} (see also \cite{BO} and \cite{BK} for different
proofs). For the sake of simplicity, we will refer to the involution I=m as
the Mullineux involution in the sequel.

A natural quantum analogue of this problem has been considered by Richards 
\cite{Rich} and by Brundan \cite{B} in the context of the Hecke algebra of
type $A$ (see also \cite{BK} and \cite{LLT}).\ The Mullineux involution can
then be interpreted as a skew-isomorphism of $A_{e-1}^{(1)}$-crystals, that
is as an isomorphism of oriented graphs switching the sign of each arrow.
The resulting algorithm then coincides with that introduced by Kleshchev in 
\cite{KL}.

In this note, we study an analogue of the Mullineux involution for the
Ariki-Koike algebras (also called cyclotomic Hecke algebras of type $%
G(l,1,n) $ in the literature). Let $l,n$ be positive integers, $R$ be a
field of arbitrary characteristic and consider $(q,Q_{0},Q_{1},...,Q_{l-1})$
an $l+1$-tuple of invertible elements in $R$ such that $q\neq 1$. The
Ariki-Koike algebra $\mathcal{H}_{R,n}:=\mathcal{H}_{R,n}(q;Q_{0},$ $%
Q_{1},...,Q_{l-1})$ over $R$ is the unital associative $R$-algebra presented
by:

\begin{itemize}
\item  generators: $T_0$, $T_1$,..., $T_{n-1}$,

\item  relations: 
\begin{align*}
& T_0 T_1 T_0 T_1=T_1 T_0 T_1 T_0, \\
& T_iT_{i+1}T_i=T_{i+1}T_i T_{i+1}\ (i=1,...,n-2), \\
& T_i T_j =T_j T_i\ (|j-i|>1), \\
&(T_0-Q_0)(T_0-Q_1)...(T_0- Q_{l-1}) = 0, \\
&(T_i-q)(T_i+1) = 0\ (i=1,...,n-1).
\end{align*}
\end{itemize}

If $l=1$ (resp. $l=2$), we obtain a Hecke algebra of type $A$ (resp. of type 
$B$). To study these algebras, by a theorem of Dipper and Mathas \cite{DM},
it is sufficient to consider the case where $Q_{0}=q^{s_{0}}$, $%
Q_{1}=q^{s_{1}}$, ..., $Q_{l-1}=q^{s_{l-1}}$ for ${\mathbf{s}}%
=(s_{0},s_{1},...,s_{l-1})$ an $l$-tuple of integers. The algebra $\mathcal{H%
}_{R,n}$ is then a cellular algebra in the sense of Graham and Lehrer \cite
{GL}. As a consequence, we can define Specht modules which are indexed by
the $l$-partitions of rank $n$. Recall that an $l$-partition ${{%
\boldsymbol{\lambda}}}$ of rank $n$ is a sequence of $l$ partitions ${%
\boldsymbol{\lambda}}=(\lambda ^{(0)},...,\lambda ^{(l-1)})$ such that $%
\displaystyle{\sum_{k=0}^{l-1}{|\lambda ^{(k)}|}}=n$. We denote by $\Pi
_{l,n}$ the set of $l$-partitions of rank $n$. Hence, to each ${%
\boldsymbol{\lambda}}\in \Pi _{l,n}$ is associated a Specht module $S^{{%
\boldsymbol{\lambda}}}$. Let $e$ be the multiplicative order of $q$ (we put $%
e=\infty $ if $q$ is not a root of unity). The Specht modules are in general
not irreducible. Nevertheless, there exists a natural bilinear form, $%
\mathcal{H}_{R,n}$-invariant, on each of these modules and an associated
radical such that the quotients $D^{{\boldsymbol{\lambda}}}:=S^{{%
\boldsymbol{\lambda}}}/\text{rad}(S^{{\boldsymbol{\lambda}}})$ are $0$ or
irreducible. Moreover, the non-zero $D^{{\boldsymbol{\lambda}}}$ provide a
complete set of non isomorphic simple modules. Let $\Phi _{e}^{\frak{s}}(n)$
be the set of $l$-partitions such that $D^{{\boldsymbol{\lambda}}}$ is non
zero (where we set $\mathfrak{s}=(s_{0}(\text{mod }e),...,s_{l-1}(\text{mod }%
e))$). It has been shown by Ariki \cite{A} and Ariki-Mathas \cite{AM}, that
this set equals to the set of Kleshchev $l$-partitions. These $l$-partitions
are generalizations of the $e$-regular partitions.\ They appear as the
vertices of a distinguished realization of the abstract $A_{e-1}^{(1)}$%
-crystal $B_{e}(\Lambda _{\mathfrak{s}})$ associated to the irreducible ${%
\mathcal{U}_{v}(\widehat{\mathfrak{sl}_{e}})}$-module of highest weight $%
\Lambda _{\mathfrak{s}}$ (see section \ref{subsec_realization}).


Following \cite{Fayers}, denote by $\widetilde{\mathcal{H}}_{R,n}$ the
algebra $\mathcal{H}_{R,n}(q^{-1};s_{l-1},...,s_{0})$ and write $\widetilde{T%
}_{0}$,...,$\widetilde{T}_{l-1}$ for the standard generators of $\widetilde{%
\mathcal{H}}_{R,n}$. If ${\boldsymbol{\lambda}}\in \Pi _{l,n}$, we write $%
\widetilde{S}^{{\boldsymbol{\lambda}}}$ for the corresponding Specht module
on $\widetilde{\mathcal{H}}_{R,n}$ and $\widetilde{D}^{{\boldsymbol{\lambda}}%
}$ for the corresponding irreducible one. We have an isomorphism $\theta :%
\mathcal{H}_{R,n}\rightarrow \widetilde{\mathcal{H}}_{R,n}$ given by 
\begin{equation*}
T_{0}\mapsto \widetilde{T}_{0}\qquad T_{i}\mapsto -q\widetilde{T}_{i}\
(i=1,...,n-1).
\end{equation*}
Put $\widetilde{\mathfrak{s}}=(-s_{l-1}($mod $e),...,-s_{0}($mod $e))$.
Then, $\theta $ induces a functor $F_{l}$ from the category of $\widetilde{%
\mathcal{H}}_{R,n}$-modules to the category of ${\mathcal{H}}_{R,n}$
modules. As a consequence, we obtain a bijective map 
\begin{equation*}
m_{l}:\Phi _{e}^{\frak{s}}\rightarrow \Phi _{e}^{\widetilde{\mathfrak{s}}},
\end{equation*}
satisfying 
\begin{equation*}
F_{l}(\widetilde{D}^{m_l({\boldsymbol{\lambda}})})\simeq {D}^{%
\boldsymbol{\lambda}},
\end{equation*}
for all $\lambda \in \Phi _{e}^{\frak{s}}$.\ This map can be viewed as a
generalization of the Mullineux involution. In \cite{Fayers}, Fayers
describes this generalized Mullineux involution by using skew-isomorphisms
of $A_{e-1}^{(1)}$-crystals.\ This description is similar to a former one
obtained in \cite{LTV} for the analogue of the Mullineux involution, namely
the Zelevinsky involution, in affine Hecke algebras of type $A$.\ Both rest
on the determination of paths in crystal graphs.\ The aim of this note is to
give an alternative efficient description of the map $m_{l}$ in the spirit
of the original procedure of Mullineux, that is without using paths in
crystals.\ We obtain an algorithm to compute the Mullineux involution for
all positive integer $l$. This is achieved by combining the known
description of the original Mullineux involution (i.e. for $l=1$) with
results of \cite{JL} on isomorphisms of $A_{e-1}^{(1)}$-crystals.

The following sections are structured as follows. Section 2 is devoted to a
brief review on $A_{e-1}^{(1)}$-crystals and their labellings by Uglov and
Kleshchev $l$-partitions.\ In Section 3, we mainly recall and reformulate in
an appropriate language results of \cite{J} and \cite{JL}. Section 4
contains the main result of this paper, namely the description of the map $%
m_{l}$ on Kleshchev $l$-partitions.\newline

\section{$A_{e-1}^{(1)}$-crystals and the Mullineux involution}

Let ${\mathcal{U}_{v}(\widehat{\mathfrak{sl}_{e}})}$ be the quantum group of
affine type $A_{e-1}^{(1)}$. This is an associative $\mathbb{Q}(v)$-algebra
with generators $e_i,f_i,t_i,t_i^{-1}$ (for $i=0,...,e-1$) and $\partial$
and relations given in \cite[\S 2.1]{U}. We denote by ${\mathcal{U}_v
^{\prime}(\widehat{\mathfrak{sl}_e}) }$ the subalgebra generated by $%
e_i,f_i,t_i,t_i^{-1}$ (for $i=0,...,e-1$). We begin this section, by
reviewing some background on the crystal graph theory of the irreducible
highest weight ${\mathcal{U}_{v}^{\prime}(\widehat{\mathfrak{sl}_{e}})}$%
-modules. In particular, we recall the notion of Uglov and Kleshchev $l$%
-partitions and describe Fayers work on the Mullineux involution. We refer
to \cite{kashi} and to \cite{arikilivre} for the general theory of crystals. 
\cite[\S 7]{gecklivre} gives a nice survey on some of the problems we will
consider.

\subsection{Realizations of abstract $A_{e-1}^{(1)}$-crystals$\label%
{subsec_realization}$}

By slightly abuse of notation, we identify the elements of $\mathbb{Z}/e%
\mathbb{Z}$ with their corresponding labels in $\{0,...,e-1\}$ when there is
no risk of confusion. Let $v$ be an indeterminate and $e>1$ be an integer.
Write $\{\Lambda _{0},...,\Lambda _{e-1},\delta\}$ for the set of dominant
weights of ${\mathcal{U}_{v}(\widehat{\mathfrak{sl}_{e}})}$. Let $l\geq 1\in 
\mathbb{N}$ and consider ${\mathbf{s}}=(s_{0},...,s_{l-1})\in \mathbb{Z}^{l}$%
. We set $\mathfrak{s}=(s_{0}(\text{mod }e),...,s_{l-1}(\text{mod }e))\in (%
\mathbb{Z}/e\mathbb{Z})^{l}$ and $\Lambda _{\mathfrak{s}}:=\Lambda _{s_{0}(%
\text{mod }e)}+...+\Lambda _{s_{l-1}(\text{mod }e)}$.

As a $\mathbb{C(}v)$-vector space, the Fock space $\frak{F}_{e}$ of level $l$
admit the set of all $l$-partitions as a natural basis. Namely the
underlying vector space is 
\begin{equation*}
\frak{F}_{e}=\bigoplus_{n\geq 0}\bigoplus_{{\boldsymbol{\lambda}}\in \Pi
_{l,n}}\mathbb{C}(v){\boldsymbol{\lambda}},
\end{equation*}
where $\Pi _{l,n}$ is the set of $l$-partitions with rank $n$. One can put
different structures of ${\mathcal{U}_{v}(\widehat{\mathfrak{sl}_{e}})}$%
-modules on $\frak{F}_{e}$. To describe them, we need some combinatorics.

Let ${\boldsymbol{\lambda}}$ be an $l$-partition (identified with its Young
diagram). Then, the nodes of ${\boldsymbol{\lambda}}$ are the triplets $%
\gamma =(a,b,c)$ where $c\in \{0,...,l-1\}$ and $a,b$ are respectively the
row and column indices of the node $\gamma $ in $\lambda ^{(c)}.$ The
content of $\gamma $ is the integer $c\left( \gamma \right) =b-a+s_{c}$ and
the residue $\mathrm{res(}\gamma )$ of $\gamma $ is the element of $\mathbb{Z%
}/e\mathbb{Z}$ such that 
\begin{equation}
\mathrm{res}(\gamma )\equiv c(\gamma )(\text{mod }e).  \label{res}
\end{equation}
We will say that $\gamma $ is an $i$-node of ${\boldsymbol{\lambda}}$ when $%
\mathrm{res}(\gamma )\equiv i(\text{mod }e).$ Finally, We say that $\gamma $
is removable when $\gamma =(a,b,c)\in {\boldsymbol{\lambda}}$ and ${%
\boldsymbol{\lambda}}\backslash \{\gamma \}$ is an $l$-partition. Similarly $%
\gamma $ is addable when $\gamma =(a,b,c)\notin {\boldsymbol{\lambda}}$ and $%
{\boldsymbol{\lambda}}\cup \{\gamma \}$ is an $l$-partition.

We now define distinct total orders on the set of addable and removable $i$%
-nodes of the multipartitions. Consider $\gamma _{1}=(a_{1},b_{1},c_{1})$
and $\gamma _{2}=(a_{2},b_{2},c_{2})$ two $i$ -nodes in ${%
\boldsymbol{\lambda}}$. We define an order $\prec _{{\mathbf{s}}}$ by setting

\begin{equation*}
\gamma _{1}\prec _{{\mathbf{s}}}\gamma _{2}\Longleftrightarrow \left\{ 
\begin{array}{l}
c(\gamma _{1})<c(\gamma _{2})\text{ or} \\ 
c(\gamma _{1})=c(\gamma _{2})\text{ and }c_{1}>c_{2}
\end{array}
\right.
\end{equation*}

Let ${\boldsymbol{\lambda}}$ and ${\boldsymbol{\mu}}$ be two $\ell $%
-partitions of rank $n$ and $n+1$ and assume there exists an $i$-node $%
\gamma $ such that $[{\boldsymbol{\mu}}]=[{\boldsymbol{\lambda}}]\cup {%
\{\gamma \}}$. We define the following numbers: 
\begin{align*}
{N}_{i}^{\succ }{({\boldsymbol{\lambda}},{\boldsymbol{\mu}})}=& \sharp \{%
\text{addable }\ i\text{-nodes }\gamma \text{ of }{\boldsymbol{\lambda}}\ 
\text{ such that }\gamma ^{\prime }\succ _{{\mathbf{s}}}\gamma \} \\
& -\sharp \{\text{removable }\ i\text{-nodes }\gamma ^{\prime }\text{ of }{%
\boldsymbol{\mu}}\ \text{ such that }\gamma ^{\prime }\succ _{{\mathbf{s}}%
}\gamma \}, \\
{N}_{i}^{\prec }{({\boldsymbol{\lambda}},{\boldsymbol{\mu}})}=& \sharp \{%
\text{addable }i\text{-nodes }\gamma ^{\prime }\text{ of }{%
\boldsymbol{\lambda}}\ \text{ such that }\gamma ^{\prime }\prec _{{\mathbf{s}%
}}\gamma \} \\
& -\sharp \{\text{removable }i\text{-nodes }\gamma ^{\prime }\text{ of }{%
\boldsymbol{\mu}}\ \text{ such that }\gamma ^{\prime }\prec _{{\mathbf{s}}%
}\gamma \}, \\
{N}_{i}{({\boldsymbol{\lambda}})}=& \sharp \{\text{addable }i\text{-nodes of 
}{\boldsymbol{\lambda}}\} \\
& -\sharp \{\text{removable }i\text{-nodes of }{\boldsymbol{\lambda}}\}\text{
for }0\leq i\leq e-1 \\
\text{ and }{M}_{0}{({\boldsymbol{\lambda}})}=& \sharp \{0\text{-nodes of }{%
\boldsymbol{\lambda}}\}.
\end{align*}

The following theorem is \cite[Thm. 2.1]{U}.

\begin{thm}[Jimbo et al. \protect\cite{jim}, Foda et al. \protect\cite{FLOTW}%
, Uglov \protect\cite{U}]
\label{jmm} Let $\mathbf{s}\in \mathbb{Z}^{l}$. Then we can put an action of 
${\mathcal{U}_{v}(\widehat{\mathfrak{sl}_{e}})}$ on $\mathcal{F}_{e}$ as
follows. 
\begin{equation*}
e_{i}{\boldsymbol{\lambda}}=\sum_{\text{res}([{\boldsymbol{\lambda}}]/[{%
\boldsymbol{\mu}}])=i}{q^{-{N}_{i}^{\succ }{({\boldsymbol{\mu}},{%
\boldsymbol{\lambda}})}}{\boldsymbol{\mu}},\qquad {f_{i}{\boldsymbol{\lambda}%
}=\sum_{\text{res}([{\boldsymbol{\mu}}]/[{\boldsymbol{\lambda}}])=i}{q^{{N}%
_{i}^{\prec }{({\boldsymbol{\lambda}},{\boldsymbol{\mu}})}}}}}{%
\boldsymbol{\mu}},
\end{equation*}
\begin{equation*}
t_{i}{\boldsymbol{\lambda}}=q^{{N}_{i}{({\boldsymbol{\lambda}})}}{%
\boldsymbol{\lambda}},\qquad \partial {\boldsymbol{\lambda}}=-(\Delta +M_{0}(%
{\boldsymbol{\lambda}})){\boldsymbol{\lambda}},\qquad (0\leq i\leq e-1)
\end{equation*}
where $\Delta $ is a rational number defined in \cite[Thm 2.1]{U} which does
not depend on ${\boldsymbol{\lambda}}$ (but depends on $\mathbf{s})$. The
associated ${\mathcal{U}_{v}(\widehat{\mathfrak{sl}_{e}})}$-module which is
denoted by $\mathcal{F}_{e}^{\mathbf{s}}$ is an integrable module.
\end{thm}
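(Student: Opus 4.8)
The plan is to obtain this action not by a bare-hands check of the defining relations but by transporting, through an explicit combinatorial bijection, the manifest $\Ue$-action on a $v$-deformed semi-infinite wedge space, in the spirit of Uglov's construction; the stated matrix coefficients are then read off from the wedge straightening, while integrability is checked separately. Throughout I write $v$ for the quantum parameter (denoted $q$ in the statement) and set $[m]=(v^{m}-v^{-m})/(v-v^{-1})$. As a guiding model I would first treat $l=1$: one encodes a partition by its semi-infinite sequence of beta-numbers and realizes the Fock space as a $v$-wedge space $\Lambda^{s_{0}}=\bigwedge^{\infty/2}V$, where $V=\bigoplus_{k\in\mathbb{Z}}\mathbb{Q}(v)u_{k}$ carries the natural level-zero action whose Chevalley generators shift a single factor $u_{k}\mapsto u_{k\pm1}$ when the residue of $k$ equals $i$. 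Re-ordering the resulting wedge back to standard (decreasing) form forces a sequence of elementary transpositions, each contributing a power of $v$; counting them shows that the exponent attached to adding, resp. removing, an $i$-node is exactly the number of intervening addable minus removable $i$-nodes, that is $N_{i}^{\prec}$, resp. $-N_{i}^{\succ}$. This is the Misra--Miwa normalization, and $t_{i}$, $\partial$ emerge as the weight and principal degree of the wedge monomial.

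For general level $l$ the essential input is Uglov's identification: the datum $\mathbf{s}\in\mathbb{Z}^{l}$ fixes a bijection between $l$-partitions and ordinary partitions, obtained by interleaving the $l$ families of beta-numbers on a single number line of period $el$. Under this bijection $\mathcal{F}_{e}^{\mathbf{s}}$ becomes a wedge space on which $\Ue$ acts, commuting moreover with a quantum affine action of type $A_{l-1}^{(1)}$ and with a Heisenberg algebra. I would then check that the order $\prec_{\mathbf{s}}$ introduced before the theorem is precisely the order that the Uglov numbering induces on the wedge factors. Transporting the wedge action yields operators of exactly the stated shape: the straightening exponents split, according to whether a factor lies $\succ_{\mathbf{s}}$ or $\prec_{\mathbf{s}}$ the modified node, into the $N_{i}^{\succ}$ and $N_{i}^{\prec}$ counts; the diagonal weight gives $t_{i}$; and the $M_{0}$-grading gives $\partial$, with the constant $\Delta$ absorbing the $\mathbf{s}$-dependent shift of the degree.

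To corroborate this I would verify the remaining relations combinatorially. Those among $t_{i}^{\pm1}$ and $\partial$ are immediate since these operators are diagonal; the conjugation relations $t_{i}e_{j}t_{i}^{-1}=v^{a_{ij}}e_{j}$ and $t_{i}f_{j}t_{i}^{-1}=v^{-a_{ij}}f_{j}$ reduce to the observation that adding an $i$-node decreases $N_{j}$ by exactly $a_{ij}$. For $i\neq j$ the operators $e_{i}$ and $f_{j}$ plainly commute, while for $i=j$ the expression $e_{i}f_{i}-f_{i}e_{i}$ applied to $\boldsymbol{\lambda}$ telescopes: the terms adding and then removing two distinct nodes cancel in pairs once the $\succ_{\mathbf{s}}/\prec_{\mathbf{s}}$ exponents are matched, and the surviving diagonal part collapses to $[N_{i}(\boldsymbol{\lambda})]\,\boldsymbol{\lambda}=(t_{i}-t_{i}^{-1})/(v-v^{-1})\,\boldsymbol{\lambda}$. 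Integrability then follows because, for fixed $\boldsymbol{\lambda}$ and fixed $i$, the addable and removable $i$-nodes form a finite word; the induced action of the $i$-th copy of $\mathcal{U}_{v}(\mathfrak{sl}_{2})$ is the usual bracketing on that word, whose strings are finite, so $e_{i}$ and $f_{i}$ act locally nilpotently on every weight vector, all weight spaces being finite-dimensional.

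The genuine obstacle is the pair of quantum Serre relations among the $e_{i}$ and among the $f_{i}$: there the signs, the $v$-binomial coefficients, and the order-dependent exponents must conspire to cancel, which is awkward to see head-on and degenerates further when $e=2$, where $a_{01}=-2$. It is exactly these identities that the wedge realization renders automatic, since on the wedge space they already hold at the level of the single-factor action; I would therefore lean on the wedge construction for the Serre relations and reserve the direct computation for the transparent relations above.
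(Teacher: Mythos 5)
The paper offers no proof of this theorem: it is quoted from Uglov \cite[Thm.\ 2.1]{U} (with the level-one case going back to Jimbo--Misra--Miwa--Okado \cite{jim} and Foda et al.\ \cite{FLOTW}), so there is no internal argument to compare against. Your sketch --- realizing $\mathcal{F}_{e}^{\mathbf{s}}$ as a $v$-deformed semi-infinite wedge space via the charge-$\mathbf{s}$ interleaving of beta-numbers, reading the exponents $N_{i}^{\prec}$ and $N_{i}^{\succ}$ off the wedge straightening, and outsourcing the quantum Serre relations to the wedge realization while checking the diagonal relations and integrability directly --- is precisely the strategy of those cited sources, so it reproduces the proof the paper implicitly relies on.
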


Now, one can define another structure of ${\mathcal{U}_{v}(\widehat{%
\mathfrak{sl}_{e}})}$-module on the Fock space $\mathcal{F}_{e}$. This is
achieved by considering the following order on the set of $i$-nodes of an $l$%
-partition : 
\begin{equation*}
\gamma _{1}\prec _{\frak{s}}\gamma _{2}\Longleftrightarrow \left\{ 
\begin{array}{l}
c_{1}>c_{2}\text{ or} \\ 
c_{1}=c_{2}\text{ and }c(\gamma _{1})<c(\gamma _{2})
\end{array}
\right. .
\end{equation*}
Using the same formulas as in theorem \ref{jmm} where $\prec _{{\mathbf{s}}}$
is replaced by $\prec _{\frak{s}}$ and $\Delta=0$, the space $\mathcal{F}_{e}
$ is endowed with a different structure of an integrable ${\mathcal{U}_{v}(%
\widehat{\mathfrak{sl}_{e}})}$-module that we will denote by $\mathcal{F}%
_{e}^{\mathfrak{ s}}$. We refer to \cite[\S 10.2]{arikilivre} for the proof
of this assertion.

Observe that the order $\prec _{{\mathbf{s}}}$ and thus $\mathcal{F}_{e}^{%
\mathbf{s}}$ really depends on ${\mathbf{s}}$ whereas $\prec _{\frak{s}}$
and thus $\mathcal{F}_{e}^{\mathfrak{ s}}$ only depends on the class $%
\mathfrak{s}\in (\mathbb{Z}/e\mathbb{Z})^{l}$.

\begin{rem}
\ 

\begin{enumerate}
\item  For $l=1$, the Fock spaces $\frak{F}_{e}^{\mathbf{s}}$ and $\frak{F}%
_{e}^{\frak{s}}$ coincide.

\item  With $\mathfrak{s}=(s_{0}(\text{mod }e),...,s_{l-1}(\text{mod }e))$),
we have 
\begin{equation}
\frak{F}_{e}^{\frak{s}}=\frak{F}_{e}^{s_{0}}\otimes \cdots \otimes \frak{F}%
_{e}^{s_{l-1}}.  \label{Fock_tens}
\end{equation}
The Fock space $\frak{F}_{e}^{\frak{s}}$ is a tensor product of Fock spaces
of level $1$.
\end{enumerate}
\end{rem}

The ${\mathcal{U}_v (\widehat{\mathfrak{sl}_e}) }$-submodule of $\frak{F}%
_{e}^{{\mathbf{s}}}$ generated by the empty $l$-partition is denoted by $%
V_{e}^{{\mathbf{s}}}(\Lambda _{\mathfrak{s}})$. By \cite[Rem. 2.2]{U}, this
is an irreducible highest weight ${\mathcal{U}_v (\widehat{\mathfrak{sl}_e}) 
}$-module with weight $\Lambda_{{\mathbf{s}}}-\Delta \delta$. Similarly, the 
${\mathcal{U}_v (\widehat{\mathfrak{sl}_e}) }$-submodule of $\frak{F}_{e}^{{%
\mathfrak{s}}}$ generated by the empty $l$-partition is denoted by $V_{e}^{{%
\mathfrak{s}}}(\Lambda _{\mathfrak{s}})$. By \cite[thm 10.10]{arikilivre},
this is an irreducible highest weight ${\mathcal{U}_v (\widehat{\mathfrak{sl}%
_e}) }$-module with weight $\Lambda_{\frak{s}}$.

\noindent We rather need in the sequel the restrictions $V_{e}^{{\mathbf{s}}%
}(\Lambda _{\mathfrak{s}})^{\prime }$ and $V_{e}^{{\mathfrak{s}}}(\Lambda _{%
\mathfrak{s}})^{\prime }$ of $V_{e}^{{\mathbf{s}}}(\Lambda _{\mathfrak{s}})$
and $V_{e}^{{\mathfrak{s}}}(\Lambda _{\mathfrak{s}})$ to the subalgebra ${%
\mathcal{U}_{v}^{\prime }(\widehat{\mathfrak{sl}_{e}})}$. They are both
irreducible as ${\mathcal{U}_{v}^{\prime }(\widehat{\mathfrak{sl}_{e}})}$%
-modules of highest weight $\Lambda _{\mathfrak{s}}$. In particular, all the
modules $V_{e}^{{\mathbf{s}}}(\Lambda _{\mathfrak{s}})^{\prime },$ ${\mathbf{%
s}}\in \frak{s}$ are isomorphic to $V_{e}^{{\mathfrak{s}}}(\Lambda _{%
\mathfrak{s}})^{\prime }.$ However, as explained above, the corresponding
actions of the Chevalley operators on these modules do not coincide in
general.

The modules $\frak{F}_{e}^{{\mathbf{s}}}$ and $\frak{F}_{e}^{\frak{s}}$ are
integrable. Hence, by the general theory of Kashiwara crystal bases, $\frak{F%
}_{e}^{{\mathbf{s}}}$ and $\frak{F}_{e}^{\frak{s}}$ have crystal graphs that
we denote $B_{e}^{{\mathbf{s}}}$ and $B_{e}^{\frak{s}},$ respectively. Since
the module structures $\frak{F}_{e}^{{\mathbf{s}}}$ and $\frak{F}_{e}^{\frak{%
s}}$ depend on the total orders $\prec _{{\mathbf{s}}}$ and $\prec _{\frak{s}%
}$, the graph structures on $B_{e}^{{\mathbf{s}}}$ and $B_{e}^{\frak{s}}$ do
not coincide in general although they both admit the set of $l$-partitions
as set of vertices.

\noindent To describe the graph structure of $B_{e}^{{\mathbf{s}}}$ (for a
fixed $\mathbf{s}\in {\frak{s}}$) we proceed as follows. Starting from any $%
l $-partition ${\boldsymbol{\lambda}}$, we can consider its set of addable
and removable $i$-nodes. Let $w_{i}$ be the word obtained first by writing
the addable and removable $i$-nodes of ${\boldsymbol{\lambda}}$ in {%
increasing} order with respect to $\prec _{{\mathbf{s}}}$ next by encoding
each addable $i$-node by the letter $A$ and each removable $i$-node by the
letter $R$.\ Write $\widetilde{w}_{i}=A^{p}R^{q}$ for the word derived from $%
w_{i}$ by deleting as many of the factors $RA$ as possible. If $p>0,$ let $%
\gamma $ be the rightmost addable $i$-node in $\widetilde{w}_{i}$.\ When $%
\widetilde{w}_{i}\neq \emptyset $, the node $\gamma $ is called the good $i$%
-node. Now, the crystal $B_{e}^{{\mathbf{s}}}$ is the graph with

\begin{itemize}
\item  vertices: the $l$-partitions,

\item  edges: $\displaystyle{{\boldsymbol{\lambda}}}\overset{i}{\rightarrow }%
{{\boldsymbol{\mu}}}$ if and only if ${\boldsymbol{\mu}}$ is obtained by
adding to ${\boldsymbol{\lambda}}$ a good $i$-node.
\end{itemize}

\noindent The graph structure on and $B_{e}^{\frak{s}}$ is obtained
similarly by using the order $\prec _{\frak{s}}$ instead of $\prec _{{%
\mathbf{s}}}$.

\subsection{Uglov and Kleshchev multipartitions}

By definition $V_{e}^{{\mathbf{s}}}(\Lambda _{\frak{s}})$ and $V_{e}^{\frak{s%
}}(\Lambda _{\frak{s}})$ are the irreducible components with highest weight
vector $\boldsymbol{
\emptyset }$ in $\frak{F}_{e}^{{\mathbf{s}}}$ and $\frak{F}_{e}^{\frak{s}}$,
their crystal graphs $B_{e}^{{\mathbf{s}}}(\Lambda _{{\frak{s}}})$ and $%
B_{e}^{\frak{s}}(\Lambda _{{\frak{s}}})$ (which are also the crystal graphs
of $V_{e}^{{\mathbf{s}}}(\Lambda _{\frak{s}})^{\prime }$ and $V_{e}^{\frak{s}%
}(\Lambda _{\frak{s}})^{\prime }$ ) can be realized respectively as the
connected components of highest weight vertex $\boldsymbol{\emptyset }$ in $%
B_{e}^{\boldsymbol{s}}$ and $B_{e}^{\frak{s}}$.

\begin{exmp}
The graph below is the subgraph of $B_{4}^{(0,0,1)}(2\Lambda _{0}+\Lambda
_{1})$ containing the $3$-partitions of rank less or equal to $4$.
\end{exmp}

\begin{center}
\begin{picture}(250,150)
\put(  125, 140){$(\emptyset,\emptyset,\emptyset)$}

 \put( 75, 100){$(1,\emptyset,\emptyset)$}
  \put( 170, 100){$(\emptyset,\emptyset,1)$}

\put( 50, 60){$(2,\emptyset,\emptyset)$}
\put( 100, 60){$(1,1,\emptyset)$} 
\put( 195, 60){$(1,1,\emptyset)$}

\put( 20, 20){$(3,\emptyset,\emptyset)$}
\put( 75, 20){$(2,\emptyset,1)$} 
\put( 120, 20){$(2,1,\emptyset)$} 
\put( 170, 20){$(1,\emptyset,2)$} 
\put( 230, 20){$(\emptyset,\emptyset,3)$}

\put( -30, -20){$(4,\emptyset,\emptyset)$}
\put( 10, -20){$(3,1,\emptyset)$} 
\put( 50, -20){$(2,\emptyset,2)$} 
\put( 85,- 20){$(2.1,\emptyset,1)$} 
\put( 130, -20){$(2,2,\emptyset)$} 
\put( 170,-20){$(1,1,2)$}
\put( 210, -20){$(1,\emptyset,3)$} 
\put( 250,-20){$(\emptyset,\emptyset,4)$}

 \put( 142, 132){\vector(-2,-1){40}} 
  \put(142, 132){\vector(2,-1){40}}

 \put( 92, 92){\vector(-1,-1){20}} \put( 92, 92){\vector(1,-1){20}} 
   \put(187, 92){\vector(1,-1){20}}

 \put( 68, 52){\vector(-1,-1){20}} \put( 68, 52){\vector(1,-1){20}} 
   \put(115, 52){\vector(1,-1){20}} 
 \put( 215, 52){\vector(-1,-1){20}} \put( 215, 52){\vector(1,-1){20}}

 \put( 32, 15){\vector(-2,-1){40}} \put( 32, 15){\vector(-1,-2){10}}

  \put( 132, 15){\vector(1,-2){10}} 
   \put(90, 15){\vector(-1,-1){20}}   \put(90, 15){\vector(1,-1){20}} 
 \put( 190, 15){\vector(0,-1){20}} \put( 190, 15){\vector(3,-2){30}} 
\put( 248, 15){\vector(1,-1){20}}

\put(  107, 120){$0$} 
\put(  170, 120){$1$}

\put(  203, 78){$0$} 
\put(  106, 78){$0$} 
 \put(  71, 78){$1$}

\put(  196, 40){$1$} 
 \put(  227, 40){$1$}  
\put(  80, 40){$1$} 
\put(  49, 40){$0$} 
\put(  130, 40){$1$}

\put(  261, 3){$0$} 
\put(  181, 2){$0$} 
\put(  141, 2){$1$} 
\put(  208, 3){$1$} 
 \put(  0, 2){$1$} 
\put(  33, 2){$0$} 
\put(  70, 3){$0$} 
 \put(  105, 3){$1$} 
\end{picture} 
\end{center}

\vspace{1cm}

We denote by $\Phi _{e}^{{\mathbf{s}}}$ the set of vertices in $B_{e}^{{%
\mathbf{s}}}(\Lambda _{{\frak{s}}})$. The elements of $\Phi _{e}^{{\mathbf{s}%
}}$ are called the Uglov $l$-partitions. We also write $\Phi _{e}^{{\mathbf{s%
}}}(n)$ for the subset of $\Phi _{e}^{{\mathbf{s}}}$ containing the Uglov $l$%
-partitions with rank $n$. In general the set $\Phi _{e}^{{\mathbf{s}}}$ is
very difficult to describe without computing the underlying crystal.
Nevertheless, in the case where $0\leq s_{0}\leq \cdot \cdot \cdot \leq
s_{l-1}\leq e-1,$ Foda et al. have given a simple non recursive description
of the Uglov $l$-partitions (see \cite[Thm 2.10]{FLOTW}).

We denote by $\Phi _{e}^{\frak{s}}$ the set of vertices in $B_{e}^{\frak{s}%
}(\Lambda _{{\frak{s}}}).$ The elements of $\Phi _{e}^{\frak{s}}$ are called
the Kleshchev $l$-partitions. We define $\Phi _{e}^{\frak{s}}(n)$ as we have
done in the Uglov case. Note that there exists only a recursive description
of the Kleshchev $l$-partitions $\Phi _{e}^{\frak{s}},$ except when the
number of fundamental weights appearing in the decomposition of $\Lambda _{{%
\frak{s}}}$ is less or equal to $2$ , see \cite{AJ} and \cite{Ari3}. In the
other cases, they can only be obtained by computing the underlying crystal
or using the procedure described in \cite[\S 5.4]{JL}.

\begin{exmp}
The graph below is the subgraph of $B_{4}^{\frak{s}}(2\Lambda _{0}+\Lambda
_{1})$ with $\frak{s}=(0$ $($mod $4),0$ $($mod $4),1$ $($mod $4$))
containing the $3$-partitions of rank less than $4$.
\end{exmp}

\begin{center}
\begin{picture}(250,150)
\put(  125, 140){$(\emptyset,\emptyset,\emptyset)$}

 \put( 75, 100){$(\emptyset,1,\emptyset)$}
  \put( 170, 100){$(\emptyset,\emptyset,1)$}

\put( 50, 60){$(\emptyset,1,1)$}
\put( 100, 60){$(1,1,\emptyset)$} 
\put( 195, 60){$(\emptyset,\emptyset,2)$}

\put( 20, 20){$(\emptyset,1,2)$}
\put( 75, 20){$(\emptyset,2,1)$} 
\put( 120, 20){$(1,1,1)$} 
\put( 170, 20){$(\emptyset,\emptyset,2.1)$} 
\put( 230, 20){$(\emptyset,\emptyset,3)$}

\put( -30, -20){$(\emptyset,1,3)$}
\put( 10, -20){$(1,1,2)$} 
\put( 50, -20){$(\emptyset,2,2)$} 
\put( 85,- 20){$(\emptyset,2.1,1)$} 
\put( 130, -20){$(1,2,1)$} 
\put( 170,-20){$(\emptyset,1,2.1)$}
\put( 210, -20){$(\emptyset,\emptyset,3.1)$} 
\put( 250,-20){$(\emptyset,\emptyset,4)$}

 \put( 142, 132){\vector(-2,-1){40}} 
  \put(142, 132){\vector(2,-1){40}}

 \put( 32, 15){\vector(-2,-1){40}} \put( 32, 15){\vector(-1,-2){10}}

 \put( 92, 92){\vector(-1,-1){20}} \put( 92, 92){\vector(1,-1){20}} 
   \put(187, 92){\vector(1,-1){20}}

 \put( 68, 52){\vector(-1,-1){20}} \put( 68, 52){\vector(1,-1){20}} 
   \put(115, 52){\vector(1,-1){20}} 
 \put( 215, 52){\vector(-1,-1){20}} \put( 215, 52){\vector(1,-1){20}}

  \put( 132, 15){\vector(1,-2){10}} 
   \put(90, 15){\vector(-1,-1){20}}   \put(90, 15){\vector(1,-1){20}} 
 \put( 190, 15){\vector(0,-1){20}} \put( 190, 15){\vector(3,-2){30}} 
\put( 248, 15){\vector(1,-1){20}}

\put(  107, 120){$0$} 
\put(  170, 120){$1$}

\put(  203, 78){$0$} 
\put(  106, 78){$0$} 
 \put(  71, 78){$1$}

\put(  196, 40){$1$} 
 \put(  227, 40){$1$}  
\put(  80, 40){$1$} 
\put(  49, 40){$0$} 
\put(  130, 40){$1$}

\put(  261, 3){$0$} 
\put(  181, 2){$0$} 
\put(  141, 2){$1$} 
\put(  208, 3){$1$} 
 \put(  0, 2){$1$} 
\put(  33, 2){$0$} 
\put(  70, 3){$0$} 
 \put(  105, 3){$1$} 

\end{picture} 
\end{center}

\vspace{1cm}

Let ${\mathbf{s}}^{\prime }:=(s_{0}^{\prime },...,s_{l-1}^{\prime })\in 
\mathbb{Z}^{l}$ be such that ${\mathbf{s}}^{\prime }\in \mathfrak{s}$. Then
the two modules $V_{e}^{{\mathbf{s}}}(\Lambda _{\frak{s}})^{\prime}$ and $%
V_{e}^{{\mathbf{s}}^{\prime }}(\Lambda _{\frak{s}})$ are both isomorphic to $%
V_{e}^{\frak{s}}(\Lambda _{\frak{s}})^{\prime}$. Hence the corresponding
three crystal graphs are isomorphic. These crystal isomorphisms define the
following bijections 
\begin{equation*}
\Psi _{e,n}^{{\mathbf{s}}\rightarrow {\mathbf{s}}^{\prime }}:\Phi _{e}^{{%
\mathbf{s}}}(n)\rightarrow \Phi _{e}^{{\mathbf{s}}^{\prime }}(n)
\end{equation*}
\begin{equation*}
\Psi _{e,n}^{{\mathbf{s}}\rightarrow \frak{s}}:\Phi _{e}^{{\mathbf{s}}%
}(n)\rightarrow \Phi _{e}^{\frak{s}}(n).
\end{equation*}
As it can be easily checked on the previous examples, the sets $\Phi _{e}^{%
\mathbf{s}}(n)$ and $\Phi _{e}^{\frak{s}}(n)$ do not coincide in general.\
However, if we fix $n$, it is possible to realize the set $\Phi _{e}^{\frak{s%
}}(n)$ of Kleshchev $l$-partitions with rank $n$ as a special set $\Phi
_{e}^{{{\mathbf{s}}}_{\infty }}(n)$, ${{\mathbf{s}}}_{\infty }\in \frak{s}$
of Uglov $l$-partitions. We refer the reader to Lemma 5.4.1 in \cite{JL} for
the proof of the following proposition.

\begin{prop}
\label{ident} Let $n\in \mathbb{N}$ and consider ${\mathbf{s}}_{\infty
}:=(s_{0},...,s_{l-1})\in \mathbb{Z}^{l}$ such that 
\begin{equation}
s_{i+1}-s_{i}>n-1  \label{asymp}
\end{equation}
for all $i=0,...,l-2$. Then $\Psi _{e,n}^{{\mathbf{s}}_{\infty }\rightarrow 
\frak{s}}$ is the identity. Hence, we have : 
\begin{equation*}
\Phi _{e}^{\frak{s}}(n)=\Phi _{e}^{{{\mathbf{s}}}_{\infty }}(n).
\end{equation*}
In particular, $\Phi _{e}^{{{\mathbf{s}}}_{\infty }}(n)$ does not depend on
the multicharge ${\mathbf{s}}_{\infty }$ provided (\ref{asymp}) is satisfied.
\end{prop}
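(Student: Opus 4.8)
The plan is to prove the sharper statement that the two \emph{labelled} crystal graphs $B_e^{\mathbf{s}_\infty}$ and $B_e^{\frak s}$ actually coincide once we restrict to $l$-partitions of rank at most $n$, and then to deduce the proposition from the rigidity of the highest weight crystal. Both $B_e^{\mathbf{s}_\infty}(\Lambda_{\frak s})$ and $B_e^{\frak s}(\Lambda_{\frak s})$ realize the crystal of the irreducible highest weight module $V_e^{\frak s}(\Lambda_{\frak s})'$, each as the connected component of the common highest weight vertex $\boldsymbol{\emptyset}$. Since such a crystal is generated from $\boldsymbol{\emptyset}$ by the operators $\tilde f_i$, any crystal isomorphism fixing $\boldsymbol{\emptyset}$ is unique; hence if the two graphs agree as labelled graphs in rank $\le n$, the isomorphism $\Psi_{e,n}^{\mathbf{s}_\infty \to \frak s}$ must be the identity on labels. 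This yields simultaneously $\Phi_e^{\frak s}(n)=\Phi_e^{\mathbf{s}_\infty}(n)$ and the assertion on $\Psi$, and the independence from $\mathbf{s}_\infty$ is then automatic because $\Phi_e^{\frak s}(n)$ depends only on $\frak s$.

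First I would reduce the coincidence of the graphs to a purely combinatorial statement about the two orders. An $i$-arrow $\boldsymbol\lambda\to\boldsymbol\mu$ in either crystal is obtained by adding the good $i$-node of $\boldsymbol\lambda$, and the good $i$-node is computed only from the total order induced on the set of addable and removable $i$-nodes of $\boldsymbol\lambda$ (via the word $w_i$, its reduction to $A^pR^q$, and the rightmost surviving $A$). Therefore it suffices to show that, for every $l$-partition $\boldsymbol\lambda$ of rank at most $n-1$ and every residue $i$, the orders $\prec_{\mathbf{s}_\infty}$ and $\prec_{\frak s}$ restrict to the \emph{same} total order on the addable and removable $i$-nodes of $\boldsymbol\lambda$. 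Granting this, the good $i$-nodes agree for all such $\boldsymbol\lambda$, so every arrow landing in rank $\le n$ is identical in both graphs; an induction on the rank $r=0,1,\dots,n$, started at $\boldsymbol{\emptyset}$, then shows that the connected components of $\boldsymbol{\emptyset}$ coincide up to rank $n$.

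The heart of the argument is this comparison of orders, and it is exactly here that hypothesis (\ref{asymp}) is used. Let $\gamma_1=(a_1,b_1,c_1)$ and $\gamma_2=(a_2,b_2,c_2)$ be two $i$-nodes of $\boldsymbol\lambda$. If $c_1=c_2$, both $\prec_{\mathbf{s}_\infty}$ and $\prec_{\frak s}$ reduce to the comparison of the contents $c(\gamma_1),c(\gamma_2)$, and hence agree. If $c_1\ne c_2$, then $\prec_{\frak s}$ compares $\gamma_1,\gamma_2$ through their components, whereas $\prec_{\mathbf{s}_\infty}$ compares them through the contents $c(\gamma_k)=(b_k-a_k)+s_{c_k}$; the point is that the diagonal parts $b_k-a_k$ are too small to override the widely separated charges $s_{c_k}$. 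Writing $m_c=|\lambda^{(c)}|$, every addable or removable node of $\lambda^{(c)}$ sits on a diagonal with $|b-a|\le m_c$, so for $c_1<c_2$ one gets $(b_1-a_1)-(b_2-a_2)\le m_{c_1}+m_{c_2}\le n-1<s_{c_2}-s_{c_1}$, the last inequality being (\ref{asymp}) summed over the intervening indices. Thus $c(\gamma_1)<c(\gamma_2)$: the content order on nodes of distinct components follows their component order exactly as $\prec_{\frak s}$ prescribes, and the two contents are never equal, so no tie-breaking intervenes. This establishes the required agreement of $\prec_{\mathbf{s}_\infty}$ and $\prec_{\frak s}$.

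I expect the main obstacle to be precisely the sharpness of this estimate: one must bound the diagonals of addable and removable nodes finely enough that the modest gap $s_{i+1}-s_i>n-1$ suffices, rather than a gap of order $2n$. The trick is to bound the \emph{sum} $m_{c_1}+m_{c_2}$ of the two component sizes by the total rank, which is at most $n-1$ for a source of an arrow into rank $\le n$, instead of bounding each diagonal separately. Everything else---the reduction to the good-node rule, the induction on rank, and the uniqueness of the highest weight crystal isomorphism---is then formal.
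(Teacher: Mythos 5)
Your argument is correct and complete, and it takes a genuinely different route from the paper only in the sense that the paper gives no proof at all here: it defers entirely to Lemma 5.4.1 of \cite{JL}, whereas you supply a direct, self-contained verification. Your reduction is the right one --- the crystal graph is determined by the good-node rule, which only sees the induced total order on the addable and removable $i$-nodes of each vertex, so it suffices to show that $\prec_{\mathbf{s}_\infty}$ and $\prec_{\frak{s}}$ agree on every $\boldsymbol{\lambda}$ of rank at most $n-1$; the uniqueness of an isomorphism of connected highest-weight crystals fixing $\boldsymbol{\emptyset}$ then forces $\Psi_{e,n}^{\mathbf{s}_\infty\to\frak{s}}$ to be the identity on ranks $\le n$. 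Your estimate is also the sharp one: addable and removable nodes of $\lambda^{(c)}$ lie on diagonals with $|b-a|\le|\lambda^{(c)}|$, so the difference of diagonals across two components is at most $|\boldsymbol{\lambda}|\le n-1<s_{c_2}-s_{c_1}$, which simultaneously forces the content comparison to follow the component order and rules out ties, so the cross-component tie-breaking clause of $\prec_{\mathbf{s}_\infty}$ never fires; within a component both orders compare contents, and those are automatically distinct for addable/removable nodes.

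One caveat you should make explicit. As printed, the paper's definition of $\prec_{\frak{s}}$ declares $\gamma_1\prec_{\frak{s}}\gamma_2$ when $c_1>c_2$, i.e.\ nodes in \emph{later} components are \emph{smaller}, whereas your computation shows that under (\ref{asymp}) nodes in later components have strictly larger content and hence are \emph{larger} for $\prec_{\mathbf{s}_\infty}$. Taken literally, the two orders would be opposite (not equal) on nodes from distinct components, and the proposition would fail. The printed formula is, however, contradicted by the paper's own example of $B_4^{\frak{s}}(2\Lambda_0+\Lambda_1)$, where $\widetilde{f}_0$ applied to the empty $3$-partition adds a node to the component of index $1$ rather than $0$; the intended condition is $c_1<c_2$, and with that correction your sentence ``exactly as $\prec_{\frak{s}}$ prescribes'' is accurate. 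You are silently using the corrected convention --- state this, since otherwise that step does not match the formula on the page.
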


This shows that the description of the bijections $\Psi _{e,n}^{{\mathbf{s}}%
\rightarrow {\mathbf{s}}^{\prime }}$ for all ${\mathbf{s}},{\mathbf{s}}%
^{\prime }\in \mathfrak{s}$ will lead that of $\Psi _{e,n}^{{\mathbf{s}}%
\rightarrow \frak{s}}$ as a special case.\ A multicharge verifying (\ref
{asymp}) is called asymptotic (this is called $n$-dominant in \cite{Y}).
This justifies the notation $\Phi _{e}^{{{\mathbf{s}}}_{\infty }}(n)$ we
have adopted.

\section{Isomorphisms of $A_{e-1}^{(1)}$-crystals\label{iso}}

We now review some results obtained in \cite{J} and \cite{JL} making
explicit the bijections $\Psi _{e,n}^{{\mathbf{s}}\rightarrow {\mathbf{s}}%
^{\prime }}$. In \cite{JL}, we have chosen to explain our results in the
language of column shaped tableaux to make apparent the link with the
combinatorics of $\frak{sl}_{n}$. Here, we rather adopt the equivalent
language of symbols also used in \cite{J} which permits to give a very
simple description of the elementary steps involved in our algorithms.

\subsection{Elementary transformations in $\widehat{S}_{l}$\label{sub_secW}}

We write as usual $\widehat{S}_{l}$ for the extended affine symmetric group
in type $A_{l-1}.$ This group is generated by the elements $\sigma
_{1},...,\sigma _{l-1}$ and $y_{0},....,y_{l-1}$ together with the relations 
\begin{equation*}
\sigma _{c}\sigma _{c+1}\sigma _{c}=\sigma _{c+1}\sigma _{c}\sigma
_{c+1},\quad \sigma _{c}\sigma _{d}=\sigma _{d}\sigma _{c}\text{ for }\left|
c-d\right| >1,\quad \sigma _{c}^{2}=1,
\end{equation*}
with all indices in $\{1,...,l-1\}$ and 
\begin{equation*}
y_{c}y_{d}=y_{d}y_{c},\quad \sigma _{c}y_{d}=y_{d}\sigma _{c}\text{ for }%
d\neq c,c+1,\quad \sigma _{c}y_{c}\sigma _{c}=y_{c+1}.
\end{equation*}
For any $c\in \{0,...,l-1\}$, we set $z_{c}=y_{1}\cdot \cdot \cdot y_{c}.$
Write also $\xi =\sigma _{l-1}\cdot \cdot \cdot \sigma _{1}$ and $\tau
=y_{l}\xi .$ Since $y_{c}=z_{c-1}^{-1}z_{c}$, $\widehat{S}_{l}$ is generated
by the transpositions $\sigma _{c}$ with $c\in \{1,...,l-1\}$ and the
elements $z_{c}$ with $c\in \{1,...,l\}.$ Observe that for any $c\in
\{1,...,l-1\},$ we have 
\begin{equation}
z_{c}=\xi ^{l-c}\tau ^{c}.  \label{dec-zi}
\end{equation}
This implies that $\widehat{S}_{l}$ is generated by the transpositions $%
\sigma _{c}$ with $c\in \{1,...,l-1\}$ and $\tau .$ Consider $e$ a fixed
positive integer. We obtain a faithful action of $\widehat{S}_{l}$ on $%
\mathbb{Z}^{l}$ by setting for any ${\mathbf{s}}=(s_{0},...,s_{l-1})\in 
\mathbb{Z}^{l}$%
\begin{equation*}
\sigma _{c}({\mathbf{s}})=(s_{0},...,s_{c},s_{c-1},...,s_{l-1})\text{ and }%
y_{c}({\mathbf{s}})=(s_{0},...,s_{c-1},s_{c}+e,...,s_{l-1}).
\end{equation*}
Then $\tau ({\mathbf{s}})=(s_{1},s_{2},...,s_{l-1},s_{0}+e)$.

\begin{rem}
>From the preceding considerations, $\{s_{1},\ldots ,s_{l-1},\tau \}$ is a
minimal set of generators for $\widehat{S}_{l}$.\ Hence, the bijections $%
\Psi _{e,n}^{{\mathbf{s}}\rightarrow {\mathbf{s}}^{\prime }}:\Phi _{e}^{{%
\mathbf{s}}}(n)\rightarrow \Phi _{e}^{{\mathbf{s}}^{\prime }}(n)$ with ${%
\mathbf{s}}\in \mathfrak{s}$ and $\Psi _{e,n}^{{\mathbf{s}}\rightarrow \frak{%
s}}:\Phi _{e}^{{\mathbf{s}}}(n)\rightarrow \Phi _{e}^{\frak{s}}(n)$ can be
obtained by composing bijections corresponding to the cases ${\mathbf{s}}%
^{\prime }=\tau ({\mathbf{s}})$ and ${\mathbf{s}}^{\prime }=\sigma _{c}({%
\mathbf{s}})$ with $c=1,...,l-1$.
\end{rem}

\subsection{Elementary crystals isomorphisms\label{subsec-elemcrys}}

The following proposition is stated in \cite[Prop. 5.2.1]{JL}. We give the
proof for the convenience of the reader.

\begin{prop}
Let ${\mathbf{s}}\in \mathbb{Z}^{l}$ and $e\in \mathbb{N}$. Let ${%
\boldsymbol{\lambda}}=(\lambda ^{(0)},...,\lambda ^{(l-1)})\in \Phi _{e}^{{%
\mathbf{s}}}(n)$. Then 
\begin{equation*}
\Psi _{e,n}^{{\mathbf{s}}\rightarrow \tau ({\mathbf{s}})}=(\lambda
^{(1)},...,\lambda ^{(l-1)},\lambda ^{(0)}).
\end{equation*}
\end{prop}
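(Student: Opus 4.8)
The plan is to prove that the cyclic rotation map
$R:\boldsymbol{\lambda}=(\lambda^{(0)},\dots,\lambda^{(l-1)})\mapsto(\lambda^{(1)},\dots,\lambda^{(l-1)},\lambda^{(0)})$ is an isomorphism of $A_{e-1}^{(1)}$-crystals from $B_{e}^{\mathbf{s}}$ to $B_{e}^{\tau(\mathbf{s})}$, and then to invoke the uniqueness of crystal isomorphisms. Since $R$ fixes the empty $l$-partition, and $\boldsymbol{\emptyset}$ is the highest weight vertex of both $B_{e}^{\mathbf{s}}(\Lambda_{\mathfrak{s}})$ and $B_{e}^{\tau(\mathbf{s})}(\Lambda_{\mathfrak{s}})$ — the weight is unchanged because the residues of $\tau(\mathbf{s})=(s_1,\dots,s_{l-1},s_0+e)$ modulo $e$ are a permutation of $\mathfrak{s}$ — the restriction of $R$ to the connected component of $\boldsymbol{\emptyset}$ must coincide with $\Psi_{e,n}^{\mathbf{s}\to\tau(\mathbf{s})}$, which is exactly the assertion.

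To show that $R$ is a crystal isomorphism I would check that it carries the good $i$-node of $\boldsymbol{\lambda}$ computed with $\prec_{\mathbf{s}}$ to the good $i$-node of $R(\boldsymbol{\lambda})$ computed with $\prec_{\tau(\mathbf{s})}$. Under $R$ a node $\gamma=(a,b,c)$ with $c\geq 1$ is sent to $(a,b,c-1)$ and a node with $c=0$ is sent to $(a,b,l-1)$. First I would record the elementary facts that $R$ preserves addability and removability, since the partition in each component is untouched, and that $R$ preserves residues: for $c\geq 1$ the content $b-a+s_c$ is unchanged, while for $c=0$ it is shifted by $+e$; in both cases $\mathrm{res}$ is preserved, so $i$-nodes go to $i$-nodes.

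The heart of the argument is to show that $R$ preserves the order $\prec_{\mathbf{s}}$ restricted to the addable and removable $i$-nodes of $\boldsymbol{\lambda}$. Writing each $i$-node's content as $i+ek(\gamma)$, the order $\prec_{\mathbf{s}}$ among $i$-nodes orders first by $k$ ascending and then by component descending. Under $R$ a component-$c$ node with $c\geq 1$ undergoes $(k,c)\mapsto(k,c-1)$, while a component-$0$ node undergoes $(k,0)\mapsto(k+1,l-1)$. For two nodes lying both in components $\geq 1$, or both in component $0$, the order is plainly preserved; the delicate case is a component-$0$ node $\gamma_1$ against a component-$(\geq 1)$ node $\gamma_2$, where the $+e$ shift competes with the tie-break by component. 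Here the key computation is that $R(\gamma_1)\prec_{\tau(\mathbf{s})}R(\gamma_2)$ holds iff $k_1+1\leq k_2$, i.e. iff $k_1<k_2$, which is precisely the condition $\gamma_1\prec_{\mathbf{s}}\gamma_2$; the shift by $e$ exactly absorbs the change of the tie-break when component $0$ wraps around to the maximal component $l-1$. I expect this case to be the main obstacle, since it is where the two effects of $\tau$, the cyclic permutation and the translation by $e$, must be seen to cancel.

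Once order, residue, and addability/removability are matched, the word $w_i$ of letters $A$ and $R$ read in increasing $\prec_{\mathbf{s}}$ order for $\boldsymbol{\lambda}$ agrees letter for letter with the word read in increasing $\prec_{\tau(\mathbf{s})}$ order for $R(\boldsymbol{\lambda})$, via the bijection of nodes induced by $R$. Hence the reduced word $\widetilde{w}_i$ and the good $i$-node correspond under $R$, giving $R(\widetilde{f}_i\boldsymbol{\lambda})=\widetilde{f}_iR(\boldsymbol{\lambda})$ and the analogous identity for $\widetilde{e}_i$. This completes the proof that $R$ is a crystal isomorphism, and the uniqueness argument above then finishes the proposition.
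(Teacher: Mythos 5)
Your proposal is correct and follows essentially the same route as the paper: both show that the cyclic rotation $(\lambda^{(0)},\dots,\lambda^{(l-1)})\mapsto(\lambda^{(1)},\dots,\lambda^{(l-1)},\lambda^{(0)})$ sends $i$-nodes to $i$-nodes and intertwines the orders $\prec_{\mathbf{s}}$ and $\prec_{\tau(\mathbf{s})}$, hence is a crystal isomorphism fixing $\boldsymbol{\emptyset}$ and must equal $\Psi_{e,n}^{\mathbf{s}\rightarrow\tau(\mathbf{s})}$ on the highest weight component. The only difference is that you spell out the delicate case (a component-$0$ node versus a component-$c\geq 1$ node, where the shift of the content by $e$ compensates the wrap-around of the tie-breaking component index) which the paper dismisses with ``we then easily check''; your computation of that case is accurate.
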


\begin{proof}
Set $\mathbf{s}=(s_{0},...,s_{\ell -1}).\;$Then $\tau (\mathbf{s}%
)=(s_{1},....,s_{\ell -1},s_{0}+e)$. Consider ${\ulambda}=(\lambda
^{(0)},...,\lambda ^{(\ell -1)})$ a multipartition and set ${\ulambda}%
^{\diamondsuit }=(\lambda ^{(1)},...,\lambda ^{(\ell -1)},\lambda ^{(0)})$.
Consider $i\in \{0,1,...,e-1\}$ and $\gamma _{1}=(a_{1},b_{1},c_{1}),$ $%
\gamma _{2}=(a_{2},b_{2},c_{2})$ two $i$-nodes of ${\ulambda}$. Then $\gamma
_{1}^{\diamondsuit }=(a_{1},b_{1},c_{1}-1(\text{mod }e))$ and $\gamma
_{2}^{\diamondsuit }=(a_{2},b_{2},c_{2}-1(\text{mod }e))$ are two $i$-nodes
of ${\ulambda}^{\diamondsuit }$. We then easily check that $\gamma _{2}\prec
_{\mathbf{s}}\gamma _{1}$ if and only if $\gamma _{2}^{\diamondsuit }\prec
_{\tau (\mathbf{s})}\gamma _{1}^{\prime }.$ This implies that $\Psi _{e,n}^{%
\mathbf{s}\rightarrow \tau (\mathbf{s})}({\ulambda})={\ulambda}%
^{\diamondsuit }$.
\end{proof}

The description of the bijections $\Psi _{e,n}^{{\mathbf{s}}\rightarrow
\sigma _{c}({\mathbf{s}})},$ $c=1,...,l-1$ are more complicated. It
essentially rests on the following basic procedure on pairs $(U,D)$ of
strictly increasing sequences of integers. let $U=[x_{1},\ldots ,x_{r}]$ and 
$D=[y_{1},\ldots ,y_{s}]$ two strictly increasing sequences of integers. We
compute from $(U,D)$ a new pair $(U^{\prime },D^{\prime })$ with $U^{\prime
}=[x_{1}^{\prime },\ldots ,x_{r}^{\prime }]$ and $D^{\prime }=[y_{1}^{\prime
},\ldots ,y_{s}^{\prime }]$ of such sequences by applying the following
algorithm :

\bigskip

\noindent \textbf{Algorithm.}

\begin{itemize}
\item  Assume $r\geq s$. We associate to $y_{1}$ the integer $x_{i_{1}}\in U$
such that 
\begin{equation}
x_{i_{1}}=\left\{ 
\begin{array}{l}
\mathrm{max}\{x\in U\mid y_{1}\geq x\}\text{ if }y_{1}\geq x_{1} \\ 
x_{r}\text{ otherwise}
\end{array}
\right. .  \label{algo1}
\end{equation}
We repeat the same procedure to the ordered pair $(U\setminus
\{x_{i_{1}}\},D\setminus \{y_{1}\}).$ By induction this yields a subset $%
\{x_{i_{1}},\ldots ,x_{i_{s}}\}\subset U.\;$Then we define $D^{\prime }$ as
the increasing reordering $\{x_{i_{1}},\ldots ,x_{i_{s}}\}$ and $U^{\prime }$
as the increasing reordering of $U\setminus \{x_{i_{1}},\ldots
,x_{i_{s}}\}\sqcup D.$

\item  Assume $r<s.\;$We associate to $x_{1}$ the integer $y_{i_{1}}\in D$
such that 
\begin{equation}
y_{i_{1}}=\left\{ 
\begin{array}{l}
\mathrm{min}\{y\in D\mid x_{1}\leq y\}\text{ if }x_{1}\leq y_{s} \\ 
y_{1}\text{ otherwise}
\end{array}
\right. .  \label{algo2}
\end{equation}
We repeat the same procedure to the ordered sequences $U\setminus \{x_{1}\}$
and $D\setminus \{y_{i_{1}}\}$ and obtain a subset $\{y_{i_{1}},\ldots
,y_{i_{r}}\}\subset D.\;$Then we define $U^{\prime }$ as the increasing
reordering $\{y_{i_{1}},\ldots ,y_{i_{r}}\}$ and $D^{\prime }$ as the
increasing reordering of $D\setminus \{y_{i_{1}},\ldots ,y_{i_{r}}\}\sqcup
U. $
\end{itemize}

\begin{exmp}
Take $U=[0,1,3]$ and $D=[0,1,2,4,7]$. The subset of $U$ determined by the
previous algorithm is $\{0,1,4\}$. This gives $U^{\prime }=[0,1,4]$ and $%
D^{\prime }=[0,1,2,3,7].$
\end{exmp}

Now consider $c\in \{1,\ldots ,e-1\}$ and ${\boldsymbol{\lambda}}=(\lambda
^{(0)},...,\lambda ^{(l-1)})\in \Phi _{e}^{{\mathbf{s}}}(n)$.\ In order to
compute $\Psi _{e,n}^{{\mathbf{s}}\rightarrow \sigma _{c}({\mathbf{s}})}({%
\boldsymbol{\lambda})}$, we need the symbol $S_{c}$ of the bipartition $%
(\lambda ^{(c-1)},\lambda ^{(c)})$. Let $d_{c-1}$ and $d_{c}$ be the number
of nonzero parts in $\lambda ^{(c-1)}$ and $\lambda ^{(c)}$ and put $m=%
\mathrm{max}(d_{c-1}-s_{c-1},d_{c}-s_{c})+1.$ Set 
\begin{equation*}
\left\{ 
\begin{array}{l}
\beta _{i}^{(c-1)}=\lambda _{i}^{(c-1)}-i+s_{c-1}+m\text{ for any }%
i=1,\ldots ,m+s_{c-1} \\ 
\beta _{i}^{(c)}=\lambda _{i}^{(c)}-i+s_{c}+m\text{ for any }i=1,\ldots
,m+s_{c}
\end{array}
\right.
\end{equation*}
where, by convention the partitions $\lambda ^{(c-1)}$ and $\lambda ^{(c)}$
are taken with an infinite number of zero parts. The symbol $S_{c}$ is the
two-row tableau 
\begin{equation*}
S_{c}=\left( 
\begin{array}{cccccc}
\beta _{m+s_{c}}^{(c)} & \beta _{m+s_{c}-1}^{(c)} & ... & ... & ... & \beta
_{1}^{(c)} \\ 
\beta _{m+s_{c-1}}^{(c-1)} & \beta _{m+s_{c-1}-1}^{(c-1)} & ... & ... & 
\beta _{1}^{(c-1)} & 
\end{array}
\right) =\left( 
\begin{array}{c}
U \\ 
D
\end{array}
\right)
\end{equation*}
Observe that $\beta _{i}^{(k)},k\in \{c-1,c\}$ is nothing but the content of
the node $\gamma =(i,\lambda _{i}^{(k)},k)\in {\boldsymbol{\lambda}}$
translated by $m$.\ This translation normalizes the symbol $S_{c}$ so that $%
\beta _{m+s_{c}}^{(c)}=\beta _{m+s_{c-1}}^{(c-1)}=0$.\ In fact, for a fixed
pair $(s_{c-1},s_{c})$ and $m$ as above, the map $\Sigma _{(s_{c-1},s_{c})}$
which associates to each bipartition its symbol is bijective.\ It is easy,
from a symbol $S_{c}$, to recover the unique bipartition $(\lambda
^{(c-1)},\lambda ^{(c)})$ such that $S_{c}$ is the symbol of $(\lambda
^{(c-1)},\lambda ^{(c)}).\;$We are now ready to describe the bijection $\Psi
_{e,n}^{{\mathbf{s}}\rightarrow \sigma _{c}({\mathbf{s}})}:\Phi _{e}^{{%
\mathbf{s}}}(n)\rightarrow \Phi _{e}^{\sigma _{c}({\mathbf{s}})}(n)$.

\noindent Let ${\mathbf{s}}\in \mathbb{Z}^{l}$ and $e\in \mathbb{N}$.
Consider ${\boldsymbol{\lambda}}=(\lambda ^{(0)},...,\lambda ^{(l-1)})\in
\Phi _{e}^{{\mathbf{s}}}(n)$ and $c\in \{1,...,l-1\}$. Write $S_{c}=\binom{U%
}{D}$ for the symbol corresponding to the bipartition $(\lambda
^{(c-1)},\lambda ^{(c)})$. Denote by $\widetilde{S}_{c}=\binom{\widetilde{U}%
}{\widetilde{D}}$ the symbol obtained by applying to $S_{c}$ the previous
algorithm. Then we compute the bipartition $(\widetilde{\lambda }^{(c-1)},%
\widetilde{\lambda }^{(c)})=\Sigma _{(s_{c-1},s_{c})}^{-1}(\widetilde{S}%
_{c}) $.\ Finally we consider the $l$-partition 
\begin{equation*}
\widetilde{{\boldsymbol{\lambda}}}=(\lambda ^{(0)},\ldots ,\widetilde{%
\lambda }^{(c)},\widetilde{\lambda }^{(c-1)}\ldots ,\lambda ^{(l-1)})
\end{equation*}
obtained by replacing in ${\boldsymbol{\lambda},}$ $\lambda ^{(c-1)}$ by $%
\widetilde{\lambda }^{(c)}$ and $\lambda ^{(c)}$ by $\widetilde{\lambda }%
^{(c-1)}$.

\begin{prop}
\label{prop_iso_sc}With the above notation, we have 
\begin{equation*}
\Psi _{e,n}^{{\mathbf{s}}\rightarrow \sigma _{c-1}({\mathbf{s}})}({%
\boldsymbol{\lambda}})=\widetilde{{\boldsymbol{\lambda}}}.
\end{equation*}
\end{prop}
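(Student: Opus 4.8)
The plan is to reduce the statement to a known result about crystal isomorphisms for level-two Fock spaces and then to reinterpret that result in the language of symbols. The bijection $\Psi _{e,n}^{{\mathbf{s}}\rightarrow \sigma _{c}({\mathbf{s}})}$ only affects the components $\lambda ^{(c-1)}$ and $\lambda ^{(c)}$, since $\sigma _{c}$ swaps only the entries $s_{c-1}$ and $s_{c}$ of the multicharge. Because the Fock space $\frak{F}_{e}^{{\mathbf{s}}}$ is a tensor product of level-one Fock spaces (as recalled in the remark following Theorem \ref{jmm}), and the crystal isomorphism is induced by swapping the two tensor factors indexed by $c-1$ and $c$, the other components are untouched. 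First I would therefore isolate the bipartition $(\lambda ^{(c-1)},\lambda ^{(c)})$ and argue that $\Psi _{e,n}^{{\mathbf{s}}\rightarrow \sigma _{c}({\mathbf{s}})}$ acts as a crystal isomorphism between the level-two Fock spaces attached to the bicharges $(s_{c-1},s_{c})$ and $(s_{c},s_{c-1})$, leaving the remaining components fixed.

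Next I would make explicit how symbols linearize the combinatorics. The key observation, already recorded in the excerpt, is that the entries $\beta _{i}^{(k)}$ of the symbol $S_{c}=\binom{U}{D}$ are exactly the contents $c(\gamma )$ of the nodes $\gamma =(i,\lambda _{i}^{(k)},k)$, shifted by the normalizing constant $m$. Under this identification, the total order $\prec _{{\mathbf{s}}}$ on $i$-nodes translates, within a fixed residue class, into the numerical order of the $\beta$-values, with the row of the symbol ($U$ versus $D$) recording the component index. The effect of applying $e_i$ and $f_i$ — hence the whole crystal structure on $B_{e}^{{\mathbf{s}}}$ — is thereby encoded by the relative positions of the beta-numbers across the two rows. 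The second step is thus to check that the pairing algorithm on $(U,D)$ is precisely the combinatorial shadow of the crystal isomorphism obtained by swapping the two factors: the algorithm matches up beta-numbers in the two rows in the canonical way dictated by $\prec _{{\mathbf{s}}}$, and the resulting $\widetilde{S}_{c}=\binom{\widetilde{U}}{\widetilde{D}}$ is exactly the symbol of $(\widetilde{\lambda }^{(c-1)},\widetilde{\lambda }^{(c)})$ for the swapped bicharge.

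Concretely I would proceed by comparison of actions: it suffices to show that $\widetilde{{\boldsymbol{\lambda}}}$ has the same weight as ${\boldsymbol{\lambda}}$ and that the map $\gamma \mapsto \widetilde{\gamma}$ on nodes commutes with the Kashiwara operators $\tilde e_i,\tilde f_i$, i.e. that adding a good $i$-node for $\prec _{{\mathbf{s}}}$ corresponds, after applying the algorithm, to adding a good $i$-node for $\prec _{\sigma _c({\mathbf{s}})}$. Since both crystals are connected and of the same highest weight $\Lambda _{\frak{s}}$, and the empty multipartition is sent to the empty multipartition, uniqueness of the crystal isomorphism (the modules being irreducible of the same highest weight, as established in the subsection on Uglov and Kleshchev multipartitions) forces the map defined by the algorithm to coincide with $\Psi _{e,n}^{{\mathbf{s}}\rightarrow \sigma _{c}({\mathbf{s}})}$. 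I expect the main obstacle to lie in this last commutation check: one must verify, essentially by a careful bookkeeping of how the $RA$-cancellation in the word $\widetilde{w}_i$ interacts with the matching produced by the algorithm, that good nodes are transported to good nodes. This is the genuinely technical heart of the argument; everything else is the translation between the three equivalent languages (nodes with the order $\prec _{{\mathbf{s}}}$, beta-numbers, and symbols). Since the proposition is quoted from \cite[Prop. 5.2.1]{JL} and the excerpt states the proof is given only for the reader's convenience, I would in practice lean on the detailed verification carried out there and present here only the symbol reformulation together with the compatibility of the algorithm with the order $\prec _{{\mathbf{s}}}$.
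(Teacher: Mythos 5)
Your proposal lands in essentially the same place as the paper: the paper's entire proof is the one-line observation that the symbol algorithm is the translation of Proposition 5.2.2 of \cite{JL} (note: 5.2.2, not 5.2.1, which is the $\tau$-shift case), and you likewise reduce everything to the symbol reformulation and defer the crystal-theoretic verification to that reference. Your additional sketch --- reduction to $l=2$ via the tensor-product structure of the Fock space, identification of the $\beta$-numbers with shifted contents, and the uniqueness of the isomorphism between connected highest-weight crystals --- is a faithful outline of what that deferred verification contains.
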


\begin{proof}  The above algorithm is the
translation in terms of symbols of Proposition 5.2.2 in \cite{JL}.
\end{proof}

\begin{exmp}
Let $e=3$ and let ${\mathbf{s}}=(0,2,0)$. Then one can check that the $3$%
-partition $(4.1,3.1,1)$ belongs to $\Phi _{e}^{{\mathbf{s}}}(10)$. We want
to determine the $3$-partition $\Psi _{e,n}^{{\mathbf{s}}\rightarrow \sigma
_{2}({\mathbf{s}})}(4.1,3.1,1)$. We first write the symbol $S_{2}$ of $%
(3.1,1)$ with respect to $\mathbf{s}$ and $m=3$ : 
\begin{equation*}
S_{2}=\left( 
\begin{array}{ccccc}
0 & 1 & 3 &  &  \\ 
0 & 1 & 2 & 4 & 7
\end{array}
\right) .
\end{equation*}
By using the previous example, we obtain 
\begin{equation*}
\widetilde{S}_{2}=\left( 
\begin{array}{ccccc}
0 & 1 & 4 &  &  \\ 
0 & 1 & 2 & 3 & 7
\end{array}
\right) .
\end{equation*}
This thus gives $\widetilde{\lambda }^{(c)}=(2)$ and $\widetilde{\lambda }%
^{(c-1)}=(3).$ Hence $\Psi _{e,n}^{{\mathbf{s}}\rightarrow \sigma _{2}({%
\mathbf{s}})}(4.1,3.1.1,\emptyset )=(4.1,2,3).$
\end{exmp}

\section{Mullineux involution for Kleshchev $l$-partitions}

\subsection{Mullineux involution as a skew crystal isomorphism}

We here keep the setting of the introduction and review a result of Fayers 
\cite{Fayers} which shows how the Mullineux involution on Kleshchev $l$%
-partitions can be computed by using paths in crystal graphs. The
generalized Mullineux involution can be regarded as a bijection 
\begin{equation*}
m_{l}:\Phi _{e}^{\frak{s}}\rightarrow \Phi _{e}^{\widetilde{\mathfrak{s}}}
\end{equation*}
where $\mathfrak{s}=(s_{0}($mod $e),...,s_{l-1}($mod $e))$ and $\widetilde{%
\mathfrak{s}}=(-s_{l-1}($mod $e),...,-s_{0}($mod $e))$.

\begin{thm}[Fayers \protect\cite{Fayers}]
Let ${\boldsymbol{\lambda}}\in \Phi _{e}^{\frak{s}}$ and consider in $B_{e}^{%
\frak{s}}(\Lambda _{\frak{s}})$ a path 
\begin{equation*}
\boldsymbol{\emptyset}\overset{i_{1}}{\rightarrow }\cdot \overset{i_{2}}{%
\rightarrow }\cdot \overset{i_{3}}{\rightarrow }\cdots \overset{i_{n}}{%
\rightarrow }{\boldsymbol{\lambda}}
\end{equation*}
from the empty $l$-partition to ${\boldsymbol{\lambda}.\;}$There exists a
corresponding path 
\begin{equation*}
\boldsymbol{\emptyset}\overset{-i_{1}}{\rightarrow }\cdot \overset{-i_{2}}{%
\rightarrow }\cdot \overset{-i_{3}}{\rightarrow }\cdots \overset{-i_{n}}{%
\rightarrow }{\boldsymbol{\mu}}
\end{equation*}
in $B_{e}^{\widetilde{\mathfrak{s}}}(\Lambda _{\frak{s}})$ from the empty $l$%
-partition to an $l$-partition ${\boldsymbol{\mu}}\in \Phi _{e}^{\widetilde{%
\mathfrak{s}}}$.\ We have then 
\begin{equation*}
m_{l}({\boldsymbol{\lambda}})={\boldsymbol{\mu}.}
\end{equation*}
\end{thm}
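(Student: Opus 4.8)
The plan is to separate the statement into two independent assertions and establish each in turn: first, that reversing the residues of a path yields a well-defined bijection $\phi\colon\Phi_e^{\mathfrak{s}}\to\Phi_e^{\widetilde{\mathfrak{s}}}$; and second, that this $\phi$ coincides with $m_l$. For the first assertion I would use that $i\mapsto -i\pmod e$ is the diagram automorphism of $A_{e-1}^{(1)}$ fixing the node $0$ and exchanging $i$ with $e-i$; it sends $\Lambda_i$ to $\Lambda_{-i}$ and hence carries $\Lambda_{\mathfrak{s}}$ to $\Lambda_{\widetilde{\mathfrak{s}}}$, the highest weight of $B_e^{\widetilde{\mathfrak{s}}}$. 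Twisting $B_e^{\mathfrak{s}}(\Lambda_{\mathfrak{s}})$ by this automorphism, that is, relabelling each arrow $i$ as $-i$ while keeping the vertices, produces a connected highest weight crystal of weight $\Lambda_{\widetilde{\mathfrak{s}}}$ with unique source $\boldsymbol{\emptyset}$. Since connected highest weight crystals of a fixed dominant weight are unique up to isomorphism, this twisted crystal is isomorphic to $B_e^{\widetilde{\mathfrak{s}}}(\Lambda_{\widetilde{\mathfrak{s}}})$ by a unique isomorphism fixing the source. Consequently the sign-reversed path exists, stays in the highest weight component, and its endpoint $\boldsymbol{\mu}\in\Phi_e^{\widetilde{\mathfrak{s}}}$ depends only on $\boldsymbol{\lambda}$ and not on the chosen path; this defines the bijection $\phi$.

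For the second assertion I would pass through Ariki's categorification theorem \cite{A}. By that result the crystal $B_e^{\mathfrak{s}}(\Lambda_{\mathfrak{s}})$ is the modular branching graph of the tower $(\mathcal{H}_{R,n})_{n\geq 0}$: its vertices label the simple modules $D^{\boldsymbol{\lambda}}$, and an arrow $\boldsymbol{\lambda}\xrightarrow{\,i\,}\boldsymbol{\mu}$ records that $\boldsymbol{\mu}$ is obtained from $\boldsymbol{\lambda}$ by the Kashiwara operator $\tilde f_i$, equivalently that $D^{\boldsymbol{\mu}}$ occurs in the socle of the $i$-induction of $D^{\boldsymbol{\lambda}}$. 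The same description holds for $(\widetilde{\mathcal{H}}_{R,n})_{n\geq 0}$ and $B_e^{\widetilde{\mathfrak{s}}}(\Lambda_{\widetilde{\mathfrak{s}}})$. The core of the argument is to determine how $F_l$ interacts with these branching functors. Because $F_l$ is pullback along the algebra isomorphism $\theta$, and the formulas $T_0\mapsto\widetilde T_0$, $T_i\mapsto -q\widetilde T_i$ are uniform in $n$ and compatible with the inclusions $\mathcal{H}_{R,n}\hookrightarrow\mathcal{H}_{R,n+1}$, the functor $F_l$ commutes with ordinary induction and restriction; the only point to verify is its effect on the residue grading.

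The key lemma, which I expect to be the main obstacle, is that $\theta$ sends the $i$-residue part to the $(-i)$-residue part. This rests on computing the eigenvalues of the Jucys--Murphy elements: a node of content $b-a+s_c$ contributes the eigenvalue $q^{\,b-a+s_c}$ on the $\mathcal{H}$-side, whereas on the $\widetilde{\mathcal{H}}$-side the simultaneous inversion $q\mapsto q^{-1}$ and the reversal-and-negation of the multicharge (from $\mathfrak{s}$ to $\widetilde{\mathfrak{s}}$) negate the content modulo $e$. Granting this, $\theta$ identifies $i$-restriction on $\widetilde{\mathcal{H}}_{R,n}$ with $(-i)$-restriction on $\mathcal{H}_{R,n}$, and dually for induction, so that on simple modules one obtains $F_l\circ\tilde f_i\simeq\tilde f_{-i}\circ F_l$. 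The care required here is that $\theta$ also rescales each $T_i$ by $-q$, which shifts the quadratic eigenvalues of the braid generators; one must check that after this rescaling the content statistic transforms \emph{exactly} by $i\mapsto -i$ and not by some further affine shift.

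Finally I would assemble the two stages. Applying $\tilde f_{-i_1},\dots,\tilde f_{-i_n}$ on the $\widetilde{\mathcal{H}}$-side carries the trivial module $\widetilde D^{\boldsymbol{\emptyset}}$ to $\widetilde D^{\boldsymbol{\mu}}$ along the sign-reversed path. Applying $F_l$ and using $F_l\circ\tilde f_{-i}\simeq\tilde f_{i}\circ F_l$ repeatedly, together with $F_l(\widetilde D^{\boldsymbol{\emptyset}})\simeq D^{\boldsymbol{\emptyset}}$, yields
\[
F_l(\widetilde D^{\boldsymbol{\mu}})\simeq \tilde f_{i_n}\cdots\tilde f_{i_1}(D^{\boldsymbol{\emptyset}})\simeq D^{\boldsymbol{\lambda}},
\]
the last isomorphism coming from the given path $\boldsymbol{\emptyset}\xrightarrow{i_1}\cdots\xrightarrow{i_n}\boldsymbol{\lambda}$. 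By the defining property $F_l(\widetilde D^{m_l(\boldsymbol{\lambda})})\simeq D^{\boldsymbol{\lambda}}$ of the generalized Mullineux map, this forces $\boldsymbol{\mu}=m_l(\boldsymbol{\lambda})$, i.e. $\phi=m_l$, which is exactly the asserted equality.
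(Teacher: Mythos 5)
You should first note that the paper does not actually prove this statement: it is quoted verbatim from Fayers \cite{Fayers} as an external input, and the authors' own contribution begins only afterwards (Proposition \ref{prop_mulli_composant} onward). So there is no internal proof to compare against; what can be assessed is whether your reconstruction is sound, and it is essentially the standard categorification argument one finds in \cite{Fayers} and \cite{B}. Your two-step decomposition is the right one: the diagram automorphism $i\mapsto -i$ of $A_{e-1}^{(1)}$ sends $\Lambda_{\mathfrak{s}}$ to $\Lambda_{\widetilde{\mathfrak{s}}}$, and uniqueness of the highest weight crystal gives the well-defined skew-isomorphism $\phi$; the identification $\phi=m_l$ then reduces, via the modular branching rules (Ariki, Grojnowski--Vazirani), to the commutation of $F_l$ with $i$-restriction up to the sign change $i\mapsto -i$.

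The one point you flag as a possible obstacle --- whether the rescaling $T_i\mapsto -q\widetilde{T}_i$ introduces a spurious affine shift in the residue statistic --- does work out, and it is worth recording why: with $L_1=T_0$ and $L_{k+1}=q^{-1}T_kL_kT_k$ on the $\mathcal{H}$-side, while $\widetilde{L}_{k+1}=(q^{-1})^{-1}\widetilde{T}_k\widetilde{L}_k\widetilde{T}_k$ on the $\widetilde{\mathcal{H}}$-side (the deformation parameter there being $q^{-1}$), one gets $\theta(L_{k+1})=q^{-1}(-q\widetilde{T}_k)\theta(L_k)(-q\widetilde{T}_k)=q\widetilde{T}_k\theta(L_k)\widetilde{T}_k$, so $\theta(L_k)=\widetilde{L}_k$ exactly, with no extra factor. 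A node of residue $i$ contributes the eigenvalue $q^{i}$ for $\mathcal{H}$ and $(q^{-1})^{j}$ for $\widetilde{\mathcal{H}}$, whence $j\equiv -i\ (\mathrm{mod}\ e)$ as you claim. Two smaller points deserve a sentence each in a complete write-up: (i) the identification of the branching graph with $B_{e}^{\mathfrak{s}}(\Lambda_{\mathfrak{s}})$ (as opposed to $B_{e}^{\mathbf{s}}$ for some lift $\mathbf{s}$) depends on the order $\prec_{\mathfrak{s}}$ used to define good nodes, so the labelling conventions must be matched to those of Section \ref{subsec_realization}; and (ii) path-independence of $\boldsymbol{\mu}$ should be stated as following from the fact that the endpoint of the sign-reversed path is $\phi(\boldsymbol{\lambda})$ for the canonical isomorphism $\phi$, not argued path by path. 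Neither is a gap in the idea, only in the level of detail.
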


In the level $1$ case (i.e. for $l=1$), the map $m_{1}$ can be described
independently of the notion of crystal graph by using an algorithm
originally due to Mullineux. We refer for example to \cite{KF} for a
complete exposition of this procedure. For all $s\in \mathbb{Z}$, it gives
the map 
\begin{equation*}
m_{1}:\Phi _{e}^{\frak{s}}\rightarrow \Phi _{e}^{-\frak{s}}.
\end{equation*}
where $\frak{s}=s(\text{mod }e)$ Our aim is now to show how the generalized
Mullineux involution (i.e. in level $l>1$) can be computed from the map $%
m_{1}$ and the results of Section \ref{iso}. For any $\mathfrak{s}=(s_{0}(%
\text{mod }e),...,s_{l-1}(\text{mod }e))\in (\mathbb{Z}/e\mathbb{Z})^{l}$,
set 
\begin{equation*}
\left\{ 
\begin{array}{c}
-\mathfrak{s}=(-s_{0}(\text{mod }e),...,-s_{l-1}(\text{mod }e)), \\ 
\widetilde{\mathfrak{s}}=(-s_{l-1}(\text{mod }e),...,-s_{0}(\text{mod }e)).
\end{array}
\right.
\end{equation*}
Our strategy is as follows. Let ${\boldsymbol{\lambda}}$ be in $\Phi _{e}^{%
\frak{s}}$ and consider a path from the empty $l$-partition to ${%
\boldsymbol{\lambda}}$ in $B_{e}^{\frak{s}}(\Lambda _{\frak{s}})$ with
arrows successively labelled by $i_{1},...,i_{n}.$

\begin{enumerate}
\item  We first describe the $l$-partition ${\boldsymbol{\nu}}$ in $\Phi
_{e}^{-\mathfrak{s}}$ defined by considering the path 
\begin{equation*}
\boldsymbol{\emptyset}\overset{-i_{1}}{\rightarrow }\cdot \overset{-i_{2}}{%
\rightarrow }\cdot \overset{-i_{3}}{\rightarrow }\cdots \overset{-i_{n}}{%
\rightarrow }{\boldsymbol{\nu}}
\end{equation*}
in $B_{e}^{-\frak{s}}(\Lambda _{-\frak{s}})$.

\item  Next, we describe the bijection $\Phi _{e}^{-\mathfrak{s}}\rightarrow
\Phi _{e}^{\widetilde{\mathfrak{s}}}$ defined by the crystal isomorphism
between $B_{e}^{-\frak{s}}(\Lambda _{-\frak{s}})$ and $B_{e}^{\widetilde{%
\frak{s}}}(\Lambda _{-\frak{s}}).$ This permits to compute ${\boldsymbol{\mu}%
}$ from ${\boldsymbol{\nu}.}$
\end{enumerate}

\subsection{Computing ${\boldsymbol{\nu}}$ from ${\boldsymbol{\lambda}}$}

\begin{prop}
\label{prop_mulli_composant}Let ${\boldsymbol{\lambda}}\in \Phi _{e}^{\frak{s%
}}$ and consider in $B_{e}^{\frak{s}}(\Lambda _{\frak{s}})$ a path 
\begin{equation*}
\boldsymbol{\emptyset}\overset{i_{1}}{\rightarrow }\cdot \overset{i_{2}}{%
\rightarrow }\cdot \overset{i_{3}}{\rightarrow }\cdots \overset{i_{n}}{%
\rightarrow }{\boldsymbol{\lambda}}
\end{equation*}
from the empty $l$-partition to ${\boldsymbol{\lambda}.\;}$Then there exists
a corresponding path 
\begin{equation*}
\boldsymbol{\emptyset}\overset{-i_{1}}{\rightarrow }\cdot \overset{-i_{2}}{%
\rightarrow }\cdot \overset{-i_{3}}{\rightarrow }\cdots \overset{-i_{n}}{%
\rightarrow }{\boldsymbol{\nu}}
\end{equation*}
in $B_{e}^{-\mathfrak{s}}(\Lambda _{-\frak{s}})$ from the empty $l$%
-partition to an $l$-partition ${\boldsymbol{\nu}}\in \Phi _{e}^{-%
\mathfrak{s}}$.\ We have then 
\begin{equation*}
{\boldsymbol{\nu}}=(m_{1}(\lambda ^{0}),\ldots ,m_{1}(\lambda ^{l-1})).
\end{equation*}
\end{prop}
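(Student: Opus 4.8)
The plan is to leverage the tensor product decomposition $\frak{F}_{e}^{\frak{s}} = \frak{F}_{e}^{s_{0}} \otimes \cdots \otimes \frak{F}_{e}^{s_{l-1}}$ from equation (\ref{Fock_tens}), which reduces the structure of $B_{e}^{\frak{s}}$ to the combinatorics of tensor products of level $1$ crystals. The key point is that the order $\prec_{\frak{s}}$ used to define $B_{e}^{\frak{s}}$ prioritizes the component index $c$, which is exactly what makes the crystal a tensor product in the sense of Kashiwara's tensor product rule.

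Let me think about how to prove the statement. We have $\boldsymbol{\emptyset} \overset{i_1}{\rightarrow} \cdots \overset{i_n}{\rightarrow} \boldsymbol{\lambda}$ in $B_e^{\frak{s}}(\Lambda_{\frak{s}})$, and we want to show that following the reversed-sign path from $\boldsymbol{\emptyset}$ in $B_e^{-\frak{s}}(\Lambda_{-\frak{s}})$ lands us on $\boldsymbol{\nu} = (m_1(\lambda^{(0)}), \ldots, m_1(\lambda^{(l-1)}))$.

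So the plan involves several steps. First, I would establish what happens in level $1$: for a single partition, the sign-reversal path $\boldsymbol{\emptyset} \overset{i_1}{\rightarrow} \cdots \rightarrow \lambda$ in $B_e^{s}(\Lambda_s)$ corresponds under sign reversal to a path ending at $m_1(\lambda)$ in $B_e^{-s}(\Lambda_{-s})$—this is precisely the crystal-theoretic characterization of the classical Mullineux involution, which is assumed known (the level $1$ case described right before the proposition). Second, and this is the crux, I would analyze how the Kashiwara operators $\tilde{f}_i$ (equivalently, the "good $i$-node" rule) behave under the change $\frak{s} \mapsto -\frak{s}$ combined with sign reversal $i \mapsto -i$.

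Here is where the tensor product structure does the work. Under $\frak{s} \mapsto -\frak{s}$, each component charge $s_c$ becomes $-s_c$, so contents negate: the content $c(\gamma) = b - a + s_c$ of a node becomes $-c(\gamma)$ after transposing the partition (swapping rows and columns sends $(a,b) \mapsto (b,a)$) and negating the charge. Thus the map sending each $\lambda^{(c)}$ to its transpose-type reflection aligns residue $i$ with residue $-i$ componentwise. The order $\prec_{\frak{s}}$, which first compares $c_1 > c_2$ and only then compares contents, has the property that the \emph{reversed} ordering structure needed for $B_e^{-\frak{s}}$ with sign-reversed residues is obtained by reversing the content comparison within each fixed component while keeping the component priority intact. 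Since the tensor product rule for crystals is governed by the primary ordering on components (the $c_1 > c_2$ clause) and the secondary content ordering only controls the internal level $1$ crystal operations within each factor, the sign-reversal path acts on each tensor factor independently. The hard part will be checking carefully that the bracketing/cancellation in the word $\widetilde{w}_i = A^p R^q$ is compatible across the tensor factors so that adding a good $i$-node in $B_e^{\frak{s}}$ and adding a good $(-i)$-node in $B_e^{-\frak{s}}$ occur in \emph{corresponding} tensor factors—i.e., that the tensor-factor index of the affected component is preserved under the whole construction.

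Concretely, I would argue as follows. By the tensor product rule, the good $i$-node of $\boldsymbol{\lambda}$ in $B_e^{\frak{s}}$ lies in a single component $\lambda^{(c)}$, determined by the signature cancellation across factors; within that factor it is the good $i$-node of the level $1$ crystal $B_e^{s_c}$. Applying the level $1$ result to that factor gives the matching good $(-i)$-node in $B_e^{-s_c}$ under the Mullineux correspondence. The remaining task is to verify that the \emph{factor} selected is the same in both $B_e^{\frak{s}}$ and $B_e^{-\frak{s}}$ along the reversed path—this follows because the signature of $i$-nodes across components (which determines the active factor via cancellation) is governed by the $\prec_{\frak{s}}$ comparison on components, and this component-level signature is preserved, with $A$ and $R$ swapped, under $\frak{s} \mapsto -\frak{s}$ and $i \mapsto -i$. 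I would then conclude by induction on the path length $n$: assuming the correspondence holds up to step $k$, the matching active factors and the level $1$ Mullineux result together force step $k+1$ to add corresponding good nodes, yielding $\boldsymbol{\nu} = (m_1(\lambda^{(0)}), \ldots, m_1(\lambda^{(l-1)}))$ componentwise. The main obstacle, as noted, is the compatibility of the tensor-product signature reductions under simultaneous sign reversal of charges and residues; everything else reduces to the known level $1$ statement applied factor by factor.
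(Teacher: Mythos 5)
Your strategy is the same as the paper's: decompose $\frak{F}_{e}^{\frak{s}}$ via (\ref{Fock_tens}) as a tensor product of level $1$ Fock spaces, induct on the length $n$ of the path, use the level $1$ Mullineux involution on each factor, and reduce the inductive step to showing that $\widetilde{f}_{i_{n}}$ on $\boldsymbol{\lambda}_{\flat}$ and $\widetilde{f}_{-i_{n}}$ on $\boldsymbol{\nu}_{\flat}$ act in the \emph{same} tensor factor. You correctly isolate that last point as the crux, but your justification of it is wrong as stated: you claim the component-level signature is preserved ``with $A$ and $R$ swapped''. If addable and removable nodes genuinely traded places, the cancellation $RA\mapsto\emptyset$ would in general single out a \emph{different} factor (roughly, the one on which $\widetilde{e}$ rather than $\widetilde{f}$ acts), and the induction would collapse. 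The correct statement, which is what the paper uses, is that the signature of each factor is preserved \emph{verbatim}: since $m_{1}$ negates the labels of arrows but keeps their directions, $\varepsilon_{i}(m_{1}(\lambda_{\flat}^{(k)}))=\varepsilon_{-i}(\lambda_{\flat}^{(k)})$ and $\varphi_{i}(m_{1}(\lambda_{\flat}^{(k)}))=\varphi_{-i}(\lambda_{\flat}^{(k)})$ for all $k$, while the ordering of the factors coming from $\prec_{\frak{s}}$ (which compares component indices first and is independent of the charges) is identical for $\frak{s}$ and $-\frak{s}$; hence the reduced word across factors is unchanged and the active factor is the same.

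A secondary issue: your middle paragraph derives the residue matching by transposing each $\lambda^{(c)}$, i.e.\ it tacitly identifies $m_{1}$ with conjugation of partitions. That is only valid for $e=\infty$; for finite $e$ one must appeal directly to the skew-isomorphism property of $m_{1}$ on level $1$ crystals, as you in fact do in your concluding paragraph. With the ``$A$ and $R$ swapped'' clause replaced by ``preserved'' and the transpose heuristic discarded, your argument becomes the paper's proof.
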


\begin{proof}
  We know by (\ref{Fock_tens}) that the Fock spaces $%
\frak{F}_{e}^{\frak{s}}$ and $\frak{F}_{e}^{-\frak{s}}$ can be regarded as
tensor products of level $1$ Fock spaces.\ Therefore,  each $l$-partition $\mathbf{b%
}=(b^{(0)},\ldots ,b^{(l-1)})$ in the crystals $B_{e}^{\frak{s}}$ and $%
B_{e}^{-\frak{s}}$ of the Fock spaces $\frak{F}_{e}^{\frak{s}}$ and $\frak{F}%
_{e}^{-\frak{s}}$ can be written $\mathbf{b}{=b}^{(0)}{\otimes \cdots
\otimes b}^{(l-1)}{.\;}$Recall that in Kashiwara crystal basis theory, the
action of each crystal operator $\widetilde{e}_{i},\widetilde{f}_{i},$ $%
i\in \{0,\ldots ,l-1\}$ on $\mathbf{b}=b_{0}\otimes \cdots \otimes b_{l-1}$ is
determined by the intergers $\varepsilon _{i}(b_{k})=\mathrm{max}\{r\in 
\mathbb{N}\mid \widetilde{e}_{i}^{r}(b_{k})\neq 0\}$ and $\varphi
_{i}(b_{k})=\mathrm{max}\{r\mid \widetilde{f}_{i}^{r}(b_{k})\neq 0\}$ when $%
k $ runs over $\{0,\ldots ,l-1\}$.\ 

\noindent We now proceed by induction on $n$.\ For $n=0$, the proposition is
clear.\ Assume that the result  is proved for $n-1$ and set ${\boldsymbol{\lambda}=}%
\widetilde{f}_{i_{n}}\cdots \widetilde{f}_{i_{1}}(\boldsymbol{\emptyset})$
in $B_{e}^{\frak{s}}(\Lambda _{\frak{s}})$. Consider ${\boldsymbol{\lambda}}%
_{^{\flat }}{=}\widetilde{f}_{i_{n-1}}\cdots \widetilde{f}_{i_{1}}(%
\boldsymbol{\emptyset})\in B_{e}^{\frak{s}}(\Lambda _{\frak{s}})$ and ${%
\boldsymbol{\nu}}_{^{\flat }}{=}\widetilde{f}_{-i_{n-1}}\cdots \widetilde{f}%
_{-i_{1}}(\boldsymbol{\emptyset})\in B_{e}^{-\frak{s}}(\Lambda _{-\frak{s}})$%
.\ By induction, we have then ${\boldsymbol{\nu}}_{^{\flat }}=(m_{1}(\lambda
_{^{\flat }}^{(0)}),\ldots ,m_{1}(\lambda _{^{\flat }}^{(l-1)})).$ The
Mullineux involution $m_{1}$ switches the signs of the arrows.\ Thus $%
\varepsilon _{i}(m_{1}(\lambda _{^{\flat }}^{(k)}))=\varepsilon
_{-i}(\lambda _{^{\flat }}^{(k)})$ and $\varphi _{i}(m_{1}(\lambda _{^{\flat
}}^{(k)}))=\varphi _{-i}(\lambda _{^{\flat }}^{(k)})$ for any $k=0,\ldots
,l-1$.\ Since $\widetilde{f}_{i_{n}}({\boldsymbol{\lambda}}_{^{\flat }})={%
\boldsymbol{\lambda}}\neq 0$, we have $\widetilde{f}_{i_{n}}({%
\boldsymbol{\nu}}_{^{\flat }})={\boldsymbol{\nu}}\neq 0$ by the above
arguments.\ Moreover $\widetilde{f}_{i_{n}}$ and $\widetilde{f}_{-i_{n}}$
act on the same component of ${\boldsymbol{\lambda}}_{^{\flat }}$ and ${%
\boldsymbol{\nu}}_{^{\flat }}$ considered as tensor products. Namely, there
exists an integer $a\in \{0,\ldots ,l-1\}$ such that 
\begin{equation*}
\left\{ 
\begin{array}{l}
{\boldsymbol{\lambda}}=(\lambda _{^{\flat }}^{(0)},\ldots ,\widetilde{f}%
_{i_{n}}(\lambda _{^{\flat }}^{(a)}),\ldots ,\lambda _{^{\flat }}^{(l-1)})
\\ 
{\boldsymbol{\nu}}=(m_{1}(\lambda _{^{\flat }}^{(0)}),\ldots ,\widetilde{f}%
_{-i_{n}}(m_{1}(\lambda _{^{\flat }}^{(a)})),\ldots ,m_{1}(\lambda _{^{\flat
}}^{(l-1)}))
\end{array}
\right. .
\end{equation*}
We have $m_{1}(\widetilde{f}_{i_{n}}(\lambda _{^{\flat }}^{(a)}))=\widetilde{f}%
_{-i_{n}}(m_{1}(\lambda _{^{\flat }}^{(a)}))$ since  $m_{1}$ switches the sign of each arrow. 
This gives ${\boldsymbol{\nu}}%
=(m_{1}(\lambda ^{0}),\ldots ,m_{1}(\lambda ^{l-1}))$ which establishes the
proposition by induction.
\end{proof}

\subsection{Computing ${\boldsymbol{\mu}}$ from ${\boldsymbol{\nu}}${\label%
{subsec_straight}}}

\label{element}

Now, assume we have computed the Kleshchev $l$-partition ${\boldsymbol{\nu}}%
\in \Phi _{e}^{-\mathfrak{s}}$ from the Kleshchev $l$-partition ${%
\boldsymbol{\lambda}}\in \Phi _{e}^{\mathfrak{s}}$ as in the previous
section.\ To compute ${\boldsymbol{\mu}\in }\Phi _{e}^{\widetilde{%
\mathfrak{s}}}$ from the required crystal isomorphism, we need to realize ${%
\boldsymbol{\nu}}$ as an Uglov $l$-partition.\ To do this, we consider an
asymptotic multicharge $\mathbf{-s}\in -\frak{s}$.\ The multicharge $\frak{s}
$ can be written $\frak{s}=(s_{0}$ $($mod $e),\ldots ,s_{l-1}$ $($mod $e))$
with $s_{c}$ in $\{0,\ldots ,e-1\}$ for any $c=0,\ldots ,l-1$. Now for any $%
c=1,\ldots ,l-1$, write $p_{c}$ for the minimal nonnegative integer such
that 
\begin{equation*}
s_{c-1}-s_{c}+p_{c}e>n-1.
\end{equation*}
Put then : 
\begin{equation}
-{\mathbf{s}}:=(-s_{0},-s_{1}+p_{1}e,-s_{2}+p_{1}e+p_{2}e,\ldots
,-s_{l-1}+\sum_{i=1}^{l-1}p_{i}e).  \label{def-s}
\end{equation}
Since $-{\mathbf{s}}$ is asymptotic, we have ${\boldsymbol{\nu}\in }\Phi
_{e}^{-{\mathbf{s}}}$ by Prop \ref{ident} and 
\begin{equation*}
\Psi _{e,n}^{-{\mathbf{s}}\rightarrow -\mathfrak{s}}({\boldsymbol{\nu}})={%
\boldsymbol{\nu}.}
\end{equation*}
Now, with the notation of Section \ref{subsec_realization} for the
generators of the affine Weyl group of type $A$, we set 
\begin{equation*}
w_{(c,d)}=\sigma _{c}\sigma _{c-1}...\sigma _{d}\text{ for any }c\geq d>0.
\end{equation*}
Given $p\in \{1,...,l-1\}$, we also introduce 
\begin{equation*}
\gamma _{p}=w_{(l-p,1)}...w_{(l-2,p-1)}w_{(l-1,p)}.
\end{equation*}
Note that for $\mathbf{v}:=(v_{0},...,v_{l-1})\in \mathbb{Z}^{l}$ and $%
p=1,...,l-1$, we have: 
\begin{equation*}
\gamma _{p}.\mathbf{v}=(v_{p},...,v_{l-1},v_{0},...,v_{p-1}).
\end{equation*}
For each $p=1,...,l-1$, set 
\begin{equation*}
\alpha _{p}=\tau ^{l-p}\gamma _{p}.
\end{equation*}
We have thus, for $\mathbf{v}=(v_{0},...,v_{l-1})\in \mathbb{Z}^{l}$%
\begin{equation*}
\alpha _{p}.\mathbf{v}=(v_{0},...,v_{p-1},v_{p}+e,...,v_{l-1}+e).
\end{equation*}
Finally, let $w_{0}=w_{(1,1)}w_{(2,1)}...w_{(l-1,1)}$ be the longest element
of the symmetric group $S_{l}$ and put 
\begin{equation}
\eta :=\alpha _{1}^{2(p_{l-1}+1)}...\alpha _{l-1}^{2(p_{1}+1)}w_{0}.
\label{def-heta}
\end{equation}

\begin{prop}
Consider the multicharge $-{\mathbf{s}}$ of (\ref{def-s}) and set 
\begin{equation}
\widetilde{{\mathbf{s}}}=(\widetilde{s_{0}},\ldots ,\widetilde{s_{l-1}}%
):=\eta .(-{\mathbf{s}})  \label{def_sprime}
\end{equation}
Then for any $i=0,\ldots ,l-1$, we have 
\begin{equation*}
\widetilde{s_{i}}\equiv -s_{l-i-1}(\text{mod }e).
\end{equation*}
Therefore $\widetilde{\mathbf{s}}$ belongs to $\widetilde{\mathfrak{s}}$.
Moreover, the multicharge $\widetilde{\mathbf{s}}$ is asymptotic.
\end{prop}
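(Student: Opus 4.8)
The plan is to evaluate the vector $\eta.(-\mathbf{s})$ coordinate by coordinate, using the explicit descriptions of the actions of $w_0$ and of the $\alpha_p$ recorded just before the statement, and then to read off both assertions from the resulting closed formula. First I would apply $w_0$. Since $w_0$ is the longest element of the finite symmetric group $S_l$, it acts on $\mathbb{Z}^l$ by reversing the coordinates, so by (\ref{def-s}) one has $(w_0.(-\mathbf{s}))_j = (-\mathbf{s})_{l-1-j} = -s_{l-1-j} + e\sum_{i=1}^{l-1-j} p_i$ for $j = 0,\ldots,l-1$. Next I would apply $\alpha_1^{2(p_{l-1}+1)}\cdots\alpha_{l-1}^{2(p_1+1)}$. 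Each $\alpha_p$ acts on $\mathbb{Z}^l$ as the translation adding $e$ to every coordinate of index $\geq p$, and such translations commute; hence the coordinate of index $j$ is increased by $e\sum_{t=1}^{j} 2(p_{l-t}+1)$. Combining the two steps gives
\[
\widetilde{s_j} = -s_{l-1-j} + e\left(\sum_{i=1}^{l-1-j} p_i + 2\sum_{t=1}^{j}(p_{l-t}+1)\right)
\]
for $j = 0,\ldots,l-1$.

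From this closed formula the first assertion is immediate: every summand other than $-s_{l-1-j}$ carries a factor of $e$, so $\widetilde{s_j} \equiv -s_{l-1-j} \pmod e$. As the $j$-th component of $\widetilde{\mathfrak{s}}$ is precisely $-s_{l-1-j}\pmod e$, this shows $\widetilde{\mathbf{s}} \in \widetilde{\mathfrak{s}}$.

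For the asymptotic condition I would compute the successive differences directly from the closed formula. Writing $c = l-1-j$, the telescoping of the two sums leaves only one term from each, and one obtains
\[
\widetilde{s_{j+1}} - \widetilde{s_j} = (s_c - s_{c-1}) + e(p_c + 2).
\]
Here I would substitute the defining inequality $s_{c-1} - s_c + p_c e > n-1$ of $p_c$ to replace $e p_c$, and then use that $0 \leq s_c \leq e-1$ holds for all indices. A short estimate then yields $\widetilde{s_{j+1}} - \widetilde{s_j} > n+1 > n-1$ for every $j = 0,\ldots,l-2$, which is exactly the asymptotic condition (\ref{asymp}) for $\widetilde{\mathbf{s}}$.

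The computation is essentially bookkeeping; the only points requiring care are identifying the action of $w_0$ as the coordinate reversal and matching the exponent $2(p_{l-p}+1)$ of $\alpha_p$ to the correct coordinate after that reversal. The one genuinely quantitative step is the final inequality, where one must track that the index governing $p_c$ is $c = l-1-j$ and exploit the bounds $0 \leq s_c \leq e-1$ to dominate the sign of $s_c - s_{c-1}$; the factor $2$ built into the exponents is exactly what makes the inequality strict with room to spare.
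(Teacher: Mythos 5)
Your proposal is correct and follows essentially the same route as the paper: both reduce to the identity $\widetilde{s_{j+1}}-\widetilde{s_j}=s_{c}-s_{c-1}+e(p_{c}+2)$ with $c=l-1-j$, and then combine the bounds $0\leq s_{c}\leq e-1$ with the defining inequality $s_{c-1}-s_{c}+p_{c}e>n-1$ to get the asymptotic condition, the congruence being immediate since $w_0$ permutes coordinates and the $\alpha_p$ translate by multiples of $e$. Your version is merely a little more explicit in writing out the full closed formula for $\widetilde{s_j}$ before differencing.
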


\begin{proof}

By definition of $\widetilde{\mathbf{s}}=(\widetilde{s_{0}},...,\widetilde{s_{l-1}})$, we have for all $d=0,\ldots ,l-2$ : 
\begin{equation}
\widetilde{s_{d+1}}-\widetilde{s_{d} }=-s_{l-d-2}+s_{l-d-1}+p_{l-{d}-1}e+2e.
\label{ineqt}
\end{equation}
Now, since the integers $s_{c},$ $c=0,\ldots ,\ell -1$ belong to $\{0,\ldots
,e-1\}$, we can write 
\begin{equation*}
-s_{l-d-2}+s_{l-d-1}+2e\geq s_{l-d-2}-s_{l-d-1}.
\end{equation*}
Hence, we derive from (\ref{ineqt}) and the definition of the integers $p_{d}
$ that 
\begin{equation*}
\widetilde{s_{d+1}}-\widetilde{s_{d}}\geq s_{l-d-2}-s_{l-d-1}+p_{l-d-1}e>n-1.
\end{equation*}
Thus, the multicharge $\eta .({\mathbf{s}})$ is asymptotic which proves our
proposition.

\end{proof}

Recall that we have a Kleshchev $l$-partition ${\boldsymbol{\nu}}\in \Phi
_{e}^{-\mathfrak{s}}$. Using Section \ref{iso}, we can compute 
\begin{equation*}
\widehat{{\boldsymbol{\nu}}}:=\Psi _{e,n}^{-{\mathbf{s}}\rightarrow 
\widetilde{\mathbf{s}}}({\boldsymbol{\nu}})
\end{equation*}
where $\widetilde{\mathbf{s}}$ is defined by (\ref{def_sprime}).\ Since $%
\widetilde{\mathbf{s}}$ is asymptotic and belongs to $\widetilde{\mathfrak{s}%
},$ we derive from Proposition \ref{ident} 
\begin{equation*}
\Psi _{e,n}^{\widetilde{\mathbf{s}}\rightarrow {\widetilde{\frak{s}}}}(%
\widehat{{\boldsymbol{\nu}}})=\widehat{{\boldsymbol{\nu}}}.
\end{equation*}
Let us summarize the transformations we have computed from ${%
\boldsymbol{\lambda}}\in \Phi _{e}^{\mathfrak{s}}$. 
\begin{equation}
{\boldsymbol{\lambda}}\in \Phi _{e}^{\mathfrak{s}}\overset{\Psi _{e,n}^{{%
\frak{s}}\rightarrow -{\frak{s}}}}{\rightarrow }{\boldsymbol{\nu}}\in \Phi
_{e}^{-\mathfrak{s}}\overset{\Psi _{e,n}^{-{\frak{s}}\rightarrow -\mathbf{s}}%
}{\rightarrow }{\boldsymbol{\nu}\in }\Phi _{e}^{-\mathbf{s}}\overset{\Psi
_{e,n}^{-{\mathbf{s}}\rightarrow \widetilde{\mathbf{s}}}}{\rightarrow }%
\widehat{{\boldsymbol{\nu}}}\in \Phi _{e}^{\widetilde{\mathbf{s}}}\overset{%
\Psi _{e,n}^{-\widetilde{\mathbf{s}}\rightarrow \widetilde{\frak{s}}}}{%
\rightarrow }\widehat{{\boldsymbol{\nu}}}{\in }\Phi _{e}^{\widetilde{%
\mathfrak{s}}}  \label{funde}
\end{equation}
Recall now that we have a path 
\begin{equation*}
\boldsymbol{\emptyset}\overset{i_{1}}{\rightarrow }.\overset{i_{2}}{%
\rightarrow }.\overset{i_{3}}{\rightarrow }...\overset{i_{n}}{\rightarrow }{%
\boldsymbol{\lambda}}
\end{equation*}
in $B_{e}^{\frak{s}}$.\ By (\ref{funde}), we have the path 
\begin{equation*}
\boldsymbol{\emptyset}\overset{-i_{1}}{\rightarrow }.\overset{-i_{2}}{%
\rightarrow }.\overset{-i_{3}}{\rightarrow }...\overset{-i_{n}}{\rightarrow }%
\widehat{{\boldsymbol{\nu}}}
\end{equation*}
in $B_{e}^{\widetilde{\mathfrak{s}}}$.\ This gives 
\begin{equation*}
m_{l}({\boldsymbol{\lambda}})={\boldsymbol{\mu}=}\widehat{{\boldsymbol{\nu}}}
\end{equation*}
Hence, the procedure $(17)$ gives a purely combinatorial algorithm for
computing the Mullineux involution which does not use the crystal (and thus
the notion of good nodes) of irreducible highest weight affine modules.

\subsection{The case $e=\infty $}

We now briefly focus on the case $e=\infty $. Our algorithm considerably
simplifies because in this case, the Kleshchev and FLOTW $l$-partitions
coincide. Moreover, the Mullineux involution $m_{1}$ reduces to the ordinary
conjugation of partitions.\ Consider ${\boldsymbol{\lambda}}\in \Phi
_{\infty }^{{\mathbf{s}}}$. The computation of $m_{l}({\boldsymbol{\lambda})}
$ can be described as follows :

\begin{itemize}
\item  First, by Proposition \ref{prop_mulli_composant}, the
skew-isomorphism of crystals from $B_{e}^{\mathfrak{s}}(\Lambda _{\frak{s}})$
to $B_{e}^{-\mathfrak{s}}(\Lambda _{-\frak{s}})$ which switches the sign of
the arrows sends ${\boldsymbol{\lambda}}$ to 
\begin{equation*}
{\boldsymbol{\mu}}=((\lambda ^{0})^{\prime },...,(\lambda ^{l-1})^{\prime })
\end{equation*}
where $m_{1}$ just corresponds to the usual conjugation of partitions.

\item  Next, $w_{0}$ being the longest element of the symmetric group, we
have 
\begin{equation*}
w_{0}.(-s_{0},...,-s_{l-1})=(-s_{l-1},...,-s_{0}).
\end{equation*}
Thus we derive 
\begin{equation*}
m_{l}({\boldsymbol{\lambda}})={\boldsymbol{\nu}}:=\Psi _{n}^{{\mathbf{s}}%
\rightarrow w_{0}.{\mathbf{s}}}({\boldsymbol{\mu}}).
\end{equation*}
\end{itemize}

\subsection{Concluding remarks}

The algorithm of Section \ref{subsec-elemcrys} also permits to compute the
bijections $\Psi _{e,n}^{{\frak{s}}\rightarrow \sigma ({\frak{s}})}$ between 
$\Phi _{e}^{\mathfrak{s}}$ and $\Phi _{e}^{\sigma (\mathfrak{s})}$ for any $%
\sigma \in S_{n}$. By decomposing $\sigma $ in terms of the generators $%
s_{c},c=1,\ldots l-1$, it suffices to compute the isomorphisms $\Psi _{e,n}^{%
{\frak{s}}\rightarrow s_{c}({\frak{s}})}$.\ Since $B_{e}^{\frak{s}}$ can be
regarded as a tensor product of crystals, we can assume $l=2$ and describe
the bijection 
\begin{equation*}
\Psi _{e,n}^{({\frak{s}}_{0},{\frak{s}}_{1})\rightarrow ({\frak{s}}_{1},{%
\frak{s}}_{0})}:\Phi _{e}^{({\frak{s}}_{0},{\frak{s}}_{1})}\rightarrow \Phi
_{e}^{({\frak{s}}_{1},{\frak{s}}_{0})}.
\end{equation*}
Consider ${\boldsymbol{\lambda}\in }\Phi _{e}^{({\frak{s}}_{0},{\frak{s}}%
_{1})}$ and choose $(a_{0},a_{1})$ an asymptotic charge with $a_{0}\in {%
\frak{s}}_{0}$ and $a_{1}\in {\frak{s}}_{1}$.\ We have ${\boldsymbol{\lambda}%
\in }\Phi _{e}^{(a_{0},a_{1})}$. Then one computes $\Psi
_{e,n}^{(a_{0},a_{1})\rightarrow (a_{1},a_{0})}({\boldsymbol{\lambda}})={%
\boldsymbol{\nu}}$ as in Proposition \ref{prop_iso_sc}. Now the problem is
that $(a_{1},a_{0})$ is not asymptotic in general.\ To make it asymptotic,
we need to compose translations by $e$ in $\mathbb{Z}^{2}$ acting on the
right coordinate.\ Observe that for any $(v_{0},v_{1})\in \mathbb{Z}^{2}$,
we have $\tau s_{1}(v_{0},v_{1})=(v_{0},v_{1}+e).$ So if we set $\kappa
=\tau s_{1}$, we have to compute the bipartitions 
\begin{equation}
{\boldsymbol{\mu}}^{(k)}{=}\Psi _{e,n}^{(a_{1},a_{0})\rightarrow \tau
^{k}(a_{1},a_{0})}({\boldsymbol{\nu})}  \label{bip}
\end{equation}
with $k\in \{0,\ldots ,m\}$ where $m\in \mathbb{N}$ is minimal such that $%
a_{1}-a_{0}+me>n-1$.\ Nevertheless, in practice one can restrict our
computations and take $m$ minimal such ${\boldsymbol{\mu}}^{(m)}={%
\boldsymbol{\mu}}^{(m+1)}$. This is because the Kleshchev bipartition is
fixed by the bijection $\Psi _{e,n}^{(\frak{s}_{1},\frak{s}_{0})\rightarrow
\tau (\frak{s}_{1},\frak{s}_{0})}$. Finally, one has 
\begin{equation*}
\Psi _{e,n}^{({\frak{s}}_{0},{\frak{s}}_{1})\rightarrow ({\frak{s}}_{1},{%
\frak{s}}_{0})}({\boldsymbol{\lambda})=\boldsymbol{\mu}}^{(m)}\text{.}
\end{equation*}

Such a stabilization phenomenon also happens during the computation of ${%
\boldsymbol{\mu}}$ from ${\boldsymbol{\nu}}$ (see Section \ref
{subsec_straight}). This means that, in practice, we have 
\begin{equation*}
\Psi _{e,n}^{-{\mathbf{s}}\rightarrow \eta (-{\mathbf{s)}}}({\boldsymbol{\nu}%
})=\Psi _{e,n}^{-{\mathbf{s}}\rightarrow \eta ^{\flat }(-{\mathbf{s)}}}({%
\boldsymbol{\nu}})
\end{equation*}
where $\eta :=\alpha _{1}^{2(p_{l-1}+1)}\cdots \alpha
_{l-1}^{2(p_{1}+1)}w_{0}$ is defined as in (\ref{def-heta}) and where $\eta
^{\flat }:=\alpha _{1}^{2(p_{l-1}^{\flat }+1)}\cdots \alpha
_{l-1}^{2(p_{1}^{\flat }+1)}w_{0}$ with $p_{c}^{\flat }\leq p_{c}$ for $c\in
\{0,\ldots ,l-1\}$.

The previous description of the bijections $\Psi _{e,n}^{{\frak{s}}%
\rightarrow \sigma ({\frak{s}})}$, $\sigma \in S_{l}$ yields an alternative
for computing $\Psi _{e,n}^{-{\frak{s}}\rightarrow \widetilde{\frak{s}}}({%
\boldsymbol{\nu})=\boldsymbol{\mu}}$ in Section \ref{subsec_straight}$.\;$%
Nevertheless, one can verify that the number of elementary transformations
it requires (i.e. the number of bijections $\Psi _{e,n}^{\mathbf{s}%
\rightarrow s_{c}(\mathbf{s})},$ $c=1,\ldots ,l-1$ or $\Psi _{e,n}^{\mathbf{s%
}\rightarrow \tau (\mathbf{s})}$ to compute) is in general greater than the
number of transformations used in $\Psi _{e,n}^{-{\mathbf{s}}\rightarrow
\eta (-{\mathbf{s)}}}$.

\begin{exmp}
We conclude this paper with a computation of the Mullineux involution on a $%
3 $-partition. Take $e=4$ and $\frak{s}=(0(\text{mod }4),1(\text{mod }4),3(%
\text{mod }4))$. Then, one can check that the $3$-partition ${%
\boldsymbol{\lambda}}:=(4,3,1.1.1))$ is a Kleshchev $3$-partition of rank $%
n=10$ associated to $\frak{s}$. Now we have 
\begin{equation*}
m_{1}(4)=(2.1.1),\ m_{1}(3)=(1.1.1),\ m_{1}(1.1.1)=(3).
\end{equation*}
Hence, to ${\boldsymbol{\lambda}}$, one can attach the Kleshchev $3$%
-partition ${\boldsymbol{\nu}}=(2.1.1,1.1.1,3)$ which is Kleshchev for $-%
\frak{s}=(0(\text{mod }4),-1(\text{mod }4),-3(\text{mod }4))$. Since $%
p_{1}=p_{2}=3$, This $3$-partition is an Uglov $3$-partition associated with
the multicharge 
\begin{equation*}
-\mathbf{s}=(0,-1+3\times 4,-3+3\times 4+3\times 4)=(0,11,21).
\end{equation*}
Following \S \ref{element}, we put : 
\begin{equation*}
\eta =\alpha _{1}^{8}\alpha _{2}^{8}w_{0}.
\end{equation*}
We compute 
\begin{equation*}
\Psi _{e,n}^{-\mathbf{s}\rightarrow \eta .(-\mathbf{s})}({\boldsymbol{\nu}}%
)=(\emptyset ,1.1.1,4.1.1.1).
\end{equation*}
Hence, we obtain 
\begin{equation*}
m_{3}({\boldsymbol{\lambda}})=(\emptyset ,1.1.1,4.1.1.1).
\end{equation*}
\end{exmp}

\noindent \textbf{Acknowledgments.} The authors are grateful to A. Kleshchev
for useful comments and for having pointed out several references. Part of
this work has been written while the authors were visiting the Mathematical
Research Institute of Mathematics in Berkeley for the programs
``Combinatorial Representation Theory'' and ``Representation Theory of
Finite Groups and Related Topics''. The authors want to thank the organizers
of these programs and the MSRI for financial supports.

\end{document}